\newtheorem{thm}{Theorem}[section]
\newtheorem{pro}[thm]{Proposition}
\newtheorem{lem}[thm]{Lemma}
\newtheorem{cor}[thm]{Corollary}
\theoremstyle{definition}
\newtheorem{defi}[thm]{Definition}
\begin{document}
\date{}
\title{\bf  The Terwilliger algebra of the Odd graph revisited from the  viewpoint of group action}

\author{Lihang Hou$^{\rm a}$\, \ Suogang Gao$^{\rm b}$\, Na Kang$^{\rm a}$\,\ Bo Hou$^{\rm b,}$\thanks{Corresponding author. E-mail address: houbo1969@163.com.}\\
{\footnotesize  $^{\rm a}$ School of Mathematics and Science, Hebei GEO University, Shijiazhuang, 050031, P. R. China}\\
\footnotesize $^{\rm b}$ School of Mathematical Sciences, Hebei Normal University, Shijiazhuang, 050024, P. R. China}
\maketitle
\begin{abstract}
Let $O_{m+1}$ denote the Odd graph  on a set of cardinality $2m+1$, where $m$ is a positive integer. Denote by   $X$ its vertex set and
by $T:=T(x_0)$ its Terwilliger algebra  with respect to any fixed vertex $x_0\in X$.
In this paper, we first prove that $T$ coincides with the centralizer algebra
of the stabilizer of $x_0$ in the automorphism group of $O_{m+1}$ by considering
the action of this automorphism group on $X\times X\times X$.
Then we give the decomposition of $T$ for $m\geq 3$ by using all the homogeneous components of $V:=\mathbb{C}^X$, each of which is a nonzero subspace of $V$ spanned by the  irreducible $T$-modules that are isomorphic.
Finally, we display an orthogonal  basis  for every  homogeneous component of $V$.
\end{abstract}

{\bf \em Key words:} Odd graph; Terwilliger algebra; Homogeneous component

{\bf  \em 2010 MSC:} 05C50, 05E15

\section{Introduction}

The Terwilliger (or subconstituent) algebra of a commutative association scheme was first introduced in \cite{p2}. This algebra is a finite-dimensional, semisimple $\mathbb{C}$-algebra and is non-commutative in general. The Terwilliger algebra has been successfully used for  studying the commutative association schemes and (in particular, $Q$-polynomial)  distance-regular graphs.

Let $\Gamma=(Y, \mathcal{R})$ denote a distance-regular graph with vertex set $Y$ and
edge set $\mathcal{R}$. The Terwilliger algebra of $\Gamma$ with respect to a fixed vertex $x\in Y$
is in fact a subalgebra of the centralizer algebra  of the stabilizer of $x$ in  automorphism group of $\Gamma$ (cf. \cite{tyy}). The latter algebra is defined to be the set of matrices that are
invariant under permuting the rows and columns by elements of this stabilizer.
 In general, the above two algebras  do not coincide.  However,
it was proved that the two algebras are equal for some distance-regular graphs:
the Hamming graphs (\cite{g1,s}), the Johnson graphs (\cite{tyy}), the folded $n$-cubes (\cite{hhgy}), the halved $n$-cubes (\cite{hhkg}) and the halved folded $2n$-cubes (\cite{cckh}). An interesting and important problem is to find when the two algebras coincide.

The present paper is about the Terwilliger algebra  of the Odd graphs known as a class of  distance-transitive graphs.
Let $O_{m+1}$ denote the Odd graph with vertex set $X$ and let $V:=\mathbb{C}^{X}$ denote the column vectors space with coordinates indexed by $X$.
Let $T:=T(x_0)$ denote the Terwilliger algebra of $O_{m+1}$ with respect to  fixed vertex $x_0\in X$; note that the distance-transitivity  of $O_{m+1}$ makes the algebra $T(x_0)$ independent of the choice of $x_0$.

We first prove that $T$ and the corresponding centralizer algebra are also equal for $O_{m+1}$
by considering
the action of the automorphism group of $O_{m+1}$ on $X\times X\times X$, and consequently obtain a basis of $T$ (see Theorem \ref{thm1}, Corollary \ref{cor2}).

To describe  the structure of irreducible modules is also an important problem in  study of the Terwilliger algebra.  We  will therefore focus on the irreducible $T$-modules for $m\geq 3$ after giving the first main result. J.S. Caughman $et\ al$. \cite{cau} and P. Terwilliger \cite[Example 6.1]{ter3}  characterized some properties of irreducible $T$-modules.
These properties can tell us that the isomorphism class
of an irreducible $T$-module  depends only on two parameters, called dual endpoint and diameter. Based on these properties, we further give the feasible dual endpoints and diameters for all irreducible $T$-modules so that we obtain all isomorphism classes
of irreducible $T$-modules (see Theorem \ref{lem5}). Moreover, a decomposition of $T$ (see Theorem \ref{pro5}) is also given  by using all the homogeneous components of $V$, each of which is a nonzero subspace of $V$ spanned by the  irreducible $T$-modules that are $T$-isomorphic; this work is originally motivated by the fact that $T$ is isomorphic to a  direct sum of full matrix algebras.

Finally,
we display an orthogonal  basis for every  homogeneous component of $V$  by using the basis of $T$ from Corollary \ref{cor2} and some properties of irreducible $T$-modules (see Theorem \ref{pro2}).

We remark that (i) for the Johnson graph $J(N;m)$ with $2m\leq N$, Y. Tan $et\ al$. \cite{tyy} showed that the Terwilliger algebra is isomorphic to the centralizer algebra  from the viewpoint of group representations. Whereas the $O_{m+1}$ is the distance-$m$ graph of the
$J(2m+1;m)$ and they have the same Terwilliger algebra. (ii) Q. Kong $et\ al$. \cite{klw} determined a basis of $T$  by using the Kronecker product
of intersection matrices and obtained the  dimension of $T$. However, the basis of $T$ given in this paper is different from that determined in \cite{klw}; (iii) the decomposition of $T$ in Theorem \ref{pro5}  of this paper might be
useful in coding theory to derive code upper bounds via semidefinite programming.

We end the introduction by recalling the definition of Odd graphs. Let $S=\{1,2,\ldots, 2m+1\}$ for a positive integer $m$.  Denote by $X$  the  collection of all $m$-subsets of $S$.
The Odd graph on $S$ is described as the graph
whose vertex set is $X$, where two $m$-subsets are adjacent whenever they are disjoint.
It is not difficult to verify that the path-length distance between $x$ and $y$ is given by
\begin{align}\label{eq1}
\partial(x,y)=\left\{\begin{array}{ll} 2|x\cap y|+1 &\text{if $|x\cap y|\leq \lfloor \frac{m-1}{2}\rfloor$},\\[0.2cm]
2m-2|x\cap y| &\text{if $|x\cap y|\geq \lfloor \frac{m-1}{2}\rfloor+1$}
\end{array}\right.\ \ \ \  (x,y\in X),
\end{align}
where $\lfloor a\rfloor$ denotes the maximal integer less than or equal to $a$. The above Odd graph, often denoted by $O_{m+1}$, is an almost-bipartite $Q$-polynomial distance-regular graph with diameter $m$. For more information on $O_{m+1}$, we refer to \cite{bcn}.

\section{Preliminaries}

In this section, we recall some  concepts and basic facts concerning
distance-regular graphs and Terwilliger algebras.

Let $\mathbb{C}$ denote the complex number field  and let $Y$ denote a nonempty finite set.
Let $V:=\mathbb{C}^{Y}$ denote the column vectors space with coordinates indexed by $Y$. We
endow $V$ with the Hermitian inner product $<, >$ that satisfies $<u,v>=u^{\rm t}\overline{v}$ for $u, v\in V$, where ${\rm t}$ denotes
transpose and ${}^-$ denotes complex conjugation. Let  ${\rm{Mat}}_{Y}(\mathbb{C})$ denote the $\mathbb{C}$-algebra of matrices with rows and columns indexed by $Y$. Observe that ${\rm{Mat}}_{Y}(\mathbb{C})$ acts on $V$ by left multiplication naturally;
we call $V$ the {\it standard module}.

Let $\Gamma=(Y, \mathcal{R})$ denote a finite, undirected, connected graph,
without loops or multiple edges, with vertex set $Y$,
edge set $\mathcal{R}$, path-length distance function $\partial$, and diameter $D:=\text{max}\{\partial(x,y)\mid x,y\in Y\}$. We say $\Gamma$ is {\it{distance-regular}} whenever for all integers $i, j,h\ (0\leq i, j,h\leq D)$ and for all vertices $x, y\in Y$
such that $\partial(x, y)=h$, the number $p^h_{ij}(x,y)=|\{z\in Y \mid \partial(x, z)=i, \partial(z, y)=j\}|$
is independent of $x$ and $y$. The constants
$p^h_{ij}:=p^h_{ij}(x,y)$ are called the {\it intersection numbers} of $\Gamma$.
 Next, we assume $\Gamma$ is distance-regular.

For $0\leq i\leq D$, let $A_i\in {\rm{Mat}}_{Y}(\mathbb{C})$ denote the $i$-th {\it{distance matrix}} of $\Gamma$: the $(x,y)$-entry of $A_i$ is $1$ if $\partial(x,y)=i$ and $0$ otherwise. Observe (\rm{i}) $A_0=I$; (\rm{ii}) $\overline{A}^{\rm t}_i=A_i\ (0\leq i \leq D)$; (\rm{iii}) $\sum_{i=0}^DA_i=J$;
(\rm{iv}) $A_iA_j=\sum^D_{h=0}p^h_{ij}A_h\ (0\leq i, j,h \leq D)$, where $I$ (resp. $J$) denotes the identity matrix (resp. the all-ones matrix).
It is known that the matrices $A_0, A_1, \ldots, A_D$ span
a subalgebra $M$ of  ${\rm{Mat}}_{Y}(\mathbb{C})$ and $M$ is called the
{\it{Bose-Mesner algebra}} of $\Gamma$. It turns out that $M$ is generated by the $adjacency\ matrix$ $A_1$.  Since $M$
is commutative and semisimple, it has another basis $E_0, E_1, \ldots, E_D$ called the {\it{primitive idempotents}} of $\Gamma$ (\cite[p. 45]{bcn}). We say that $\Gamma$ is  {\it{$Q$-polynomial}}  with respect to a given ordering
$E_0, E_1, \ldots, E_D$ of the primitive idempotents if there are polynomials $q_i$ of degree $i\ (0\leq i\leq D)$ such that $E_i=q_i(E_1)$ for this ordering,
where the matrix multiplication is entrywise (so that $(E_i)_{xy}=q_i((E_1)_{xy})$ for all vertices
$x,y\in Y$).

Fix a vertex $x\in Y$ and view it as the ``base vertex". For $0\leq i\leq D$, let the diagonal matrix $E^*_i:=E^*_i(x)\in {\rm{Mat}}_{Y}(\mathbb{C})$ denote the $i$-th {\it{dual idempotent}} of $\Gamma$: the
$(y,y)$-entry of $E^*_i$ is $1$ if $\partial(x,y)=i$ and $0$ otherwise. Observe  (\rm{i}) $\overline{E^*_i}^{\rm t}=E^*_i\ (0\leq i \leq D)$; (\rm{ii}) $\sum^D_{i=0}E^*_i=I$; (\rm{iii}) $E^*_iE^*_j=\delta_{ij}E^*_i\ (0\leq i, j \leq D)$.
It is known that $E^*_0, E^*_1, \ldots, E^*_D$ span  a commutative subalgebra $M^*:=M^*(x)$ of ${\rm{Mat}}_{Y}(\mathbb{C})$ and $M^*$ is called the {\it{dual Bose-Mesner algebra}} of $\Gamma$ with respect to $x$ (\cite[p. 378]{p2}).

Let $T:=T(x)$ denote the subalgebra of  ${\rm{Mat}}_{Y}(\mathbb{C})$
generated by $M$ and $M^*$.  The subalgebra $T$ is called
the {\it Terwilliger} (or {\it subconstituent}) {\it algebra} of $\Gamma$ with respect to $x$. This algebra is a
finite-dimensional, semisimple  $\mathbb{C}$-algebra and is non-commutative in general (\cite{p2}).

By a  $T$-{\it{module}}, we mean a subspace $W$ of $V$ such that $bW\subseteq W$
for all $b\in T$.
Let $W,W'$ denote $T$-modules. Then $W,W'$ are said to be $T$-{\it{isomorphic}}  ({\it{isomorphic}} for short) whenever there exists an isomorphism of vector spaces $\phi$: $W\rightarrow W'$ such that
\begin{equation}\nonumber
 (b\phi- \phi b)W=0\ \ \text{ for all $b\in T$}.
\end{equation}
A $T$-module $W$ is said to be $irreducible$ whenever $W\neq 0$ and $W$ contains no $T$-modules other than  $0$ and $W$. Every nonzero $T$-module is an orthogonal direct sum of irreducible $T$-modules; in particular, the standard module $V$ is an orthogonal direct sum of irreducible $T$-modules. We remark that any two non-isomorphic irreducible $T$-modules are
orthogonal.

Let $W$ be an irreducible $T$-module.
The {\it{diameter}}  (resp. {\it{dual diameter}}) of $W$ is defined as $|\{i\mid0\leq i\leq D, E_i^*W\neq 0\}|-1$ (resp. $|\{i\mid 0\leq i\leq D, E_iW\neq 0\}|-1$). $W$ is said to be {\it{thin}} (resp. {\it{dual thin}}) whenever $\dim(E^*_iW)\leq 1$ (resp. $\dim(E_iW)\leq 1$) for all $0\leq i\leq D$. Note that $W$ is thin (resp. dual thin)  if and only if the diameter (resp. dual diameter)  of $W$ is equal to dim$(W)-1$. By the {\it{endpoint}} of $W$, we mean $\min \{i\mid 0\leq i\leq D, E^*_iW\neq 0\}$.
From now on, we suppose $\Gamma$ is $Q$-polynomial with respect to the given ordering
$\{E_i\}^D_{i=0}$.
By the {\it{dual endpoint}} of $W$, we mean  $\min\{i\mid 0\leq i\leq D, E_iW\neq 0\}$).
It is known that $W$ is thin if and only if $W$ is dual thin.

See \cite{p2, ter3} for more information on the Terwilliger algebra. Note that the endpoint (resp.  diameter) of $W$ is called dual endpoint (resp. dual diameter)
of $W$ in the two papers.

\begin{lem}{\rm (\cite[Lemmas 3.9, 3.12]{p2})}\label{lem1}  Let $W$ denote an irreducible $T$-module and
let $\nu, \mu, d\ (0\leq \nu, \mu,  d\leq D) $ denote its endpoint, dual endpoint and diameter, respectively.  The following {\rm{(i)}--\rm{(v)}} hold.
\begin{itemize}
\item[\rm(i)] $A_1E^*_iW\subseteq E^*_{i-1}W+E^*_iW+E^*_{i+1}W$\ \ $(0\leq i\leq D)$.
\item[\rm(ii)] $E^*_iW\neq 0$ if and only if $\nu\leq i\leq \nu+d$.
\item[\rm(iii)] $E^*_jA_1E^*_iW\neq 0$ if $|j-i|=1$\ \ $(\nu\leq i,j \leq \nu+d)$.
\item[\rm(iv)] Suppose $W$ is thin. Then\ \  $E_jW=E_jE^*_\nu W$\ \ $(0\leq j \leq D)$.
\item[\rm(v)] Suppose $W$ is dual thin. Then\ \ $E^*_jW=E^*_jE_\mu W$\ \ $(0\leq j \leq D)$.
\end{itemize}
\end{lem}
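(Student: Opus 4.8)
The plan is to derive all five parts from the irreducibility of $W$ together with the one elementary ``tridiagonal'' relation forced by the triangle inequality on $\Gamma$. First I would record that $E_j^*A_1E_i^*=0$ whenever $|i-j|>1$: the $(y,z)$-entry of $E_j^*A_1E_i^*$ can be nonzero only if $\partial(x,y)=j$, $\partial(x,z)=i$ and $\partial(y,z)=1$, which forces $|i-j|\le 1$. Since $\sum_kE_k^*=I$ this yields $A_1E_i^*=E_{i-1}^*A_1E_i^*+E_i^*A_1E_i^*+E_{i+1}^*A_1E_i^*$ as operators; applying both sides to $W$ and using $A_1,E_k^*\in T$ gives $E_k^*A_1E_i^*W\subseteq E_k^*W$, hence (i).

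For (ii) and (iii) the common device is: for $J\subseteq\{0,\dots,D\}$ put $W_J:=\sum_{i\in J}E_i^*W$, which is automatically $M^*$-invariant and, by (i), becomes $A_1$-invariant --- hence a $T$-submodule --- as soon as $E_{i-1}^*A_1E_i^*W$ and $E_{i+1}^*A_1E_i^*W$ lie in $W_J$ for each $i\in J$. For (ii), if $\{i:E_i^*W\neq 0\}$ were not an interval I would take the least $i_0>\nu$ with $E_{i_0}^*W=0$ but $E_j^*W\neq 0$ for some $j>i_0$, and set $J=\{0,\dots,i_0-1\}$; then $E_{i_0}^*A_1E_{i_0-1}^*W\subseteq E_{i_0}^*W=0$, so $W_J$ is a $T$-submodule with $0\neq W_J\neq W$, contradicting irreducibility --- thus the set is the integer interval $[\nu,\nu+d]$. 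For (iii), if $E_{i_0+1}^*A_1E_{i_0}^*W=0$ for some $\nu\le i_0\le\nu+d-1$, then $J=\{0,\dots,i_0\}$ makes $W_J$ a proper nonzero $T$-submodule (proper because $E_{i_0+1}^*W\neq 0$ by (ii)), a contradiction; the subcase $|j-i|=1$ with $j=i-1$ is handled symmetrically with $J=\{i_0,\dots,D\}$.

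For (iv), assume $W$ is thin, so $E_i^*W$ is one-dimensional for $\nu\le i\le\nu+d$; fix $0\neq w_i\in E_i^*W$ and put $w:=w_\nu$. By (i), $A_1w_i$ is a combination of $w_{i-1},w_i,w_{i+1}$, and by (iii) the coefficient of $w_{i+1}$ is nonzero for $\nu\le i\le\nu+d-1$; solving for $w_{i+1}$ and inducting on $i$ shows $w_i\in\mathrm{span}\{w,A_1w,\dots,A_1^{\,i-\nu}w\}\subseteq Mw$, so $W=Mw=ME_\nu^*W$. Since $E_jM=\mathbb{C}E_j$, this gives $E_jW=E_jME_\nu^*W=E_jE_\nu^*W$. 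Part (v) is the formal dual of (iv): because $\Gamma$ is $Q$-polynomial with respect to $\{E_i\}$ one has $E_jA_1^*E_i=0$ for $|i-j|>1$ (with $A_1^*$ the dual adjacency matrix, which generates $M^*$), so the analogues of (i)--(iii) hold with $(M,M^*,A_1,E_i^*)$ replaced throughout by $(M^*,M,A_1^*,E_i)$, and the argument of (iv) then yields $W=M^*E_\mu W$ and hence $E_j^*W=E_j^*E_\mu W$.

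The step I expect to need the most care is the submodule bookkeeping in (ii)--(iii): choosing the cut index $i_0$ correctly, and checking the boundary cases $i=0$ or $i=D$ (where $E_{i-1}^*$ or $E_{i+1}^*$ is absent) and $i=\nu$ or $i=\nu+d$ (where some $w_{i\pm1}$ vanishes), so that no exceptional term spoils the invariance of $W_J$ or the induction. A minor additional point is to articulate cleanly that the $Q$-polynomial hypothesis --- equivalently, $E_jA_1^*E_i=0$ for $|i-j|>1$ --- is exactly what licenses the dualization used in (v).
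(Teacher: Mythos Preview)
Your proposal is correct, but note that the paper itself does not supply a proof of this lemma at all: it is stated with a citation to \cite[Lemmas 3.9, 3.12]{p2} and used as a black box. What you have written is essentially a self-contained reconstruction of the standard argument from Terwilliger's original paper --- the tridiagonal relation from the triangle inequality, the ``proper $T$-submodule'' contradiction for (ii)--(iii), and the $W=ME_\nu^*W$ computation for (iv) together with its $Q$-polynomial dual for (v) --- so there is no methodological divergence to report beyond the fact that you prove what the paper merely quotes.
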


\section{The Terwilliger algebra of $O_{m+1}$}
Since $O_{m+1}$ is distance-transitive, its  Terwilliger algebra is (up to isomorphism) independent of the choice of  base vertex. Therefore, for the rest of this paper, we  choose the vertex $x_0:=\{1,2,\ldots,m\}\in X$ as the base vertex and let $T:=T(x_0)$ denote the Terwilliger algebra of $O_{m+1}$ with respect to $x_0$.
In this section, we prove that the algebra $T$
coincides with the centralizer algebra for $O_{m+1}$, and consequently obtain a basis of $T$.
We begin with the following notation.

To each ordered triple $(x,y,z)\in X\times X\times X$,
we associate the four-tuple $(i,j,t,p)$:
\begin{align}\label{eq2}
\partial(x,y,z):=(i,j,t,p),\ \text{where} \ \ i=|x\cap y|,\ j=|x\cap z|,\  t=|y\cap z|,\  p=|x\cap y\cap z|.
\end{align}
Denote by $\mathcal{I}_m$ the set consisting of all four-tuples
$(i,j,t,p)$ that occur as $\partial(x,y,z)=(i,j,t,p)$ for some $x,y,z\in X$.

\begin{pro}\label{pro1}
We have
\begin{align}
\mathcal{I}_m=\big\{(i,j,t,p)\mid\ &0\leq i,j\leq m,\  \mathrm{max}\{i+j-m,m-1-i-j\}\leq t\leq m-|i-j|, \nonumber\\
&\mathrm{max}\{0,i+j-m,i+t-m,j+t-m\}\leq p\leq \label{eq3}\\
&\ \ \ \ \ \ \ \ \ \  \ \ \ \ \ \ \  \ \ \ \ \ \ \ \ \ \  \ \ \ \ \ \ \ \ \ \ \ \ \ \ \ \ \ \ \ \  \mathrm{min}\{i,j,t,i+j+t+1-m\}\big\}\nonumber
\end{align}
for $m\geq 1$. Moreover, the cardinality of $\mathcal{I}_m$ is
${m+4\choose 4}$.
\end{pro}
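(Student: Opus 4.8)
The plan is to pass from triples $(x,y,z)$ to the Venn diagram of $x,y,z\subseteq S$, read off membership in $\mathcal I_m$ as nonnegativity of the cell sizes, and then count lattice points by a generating function. So first I would analyze the Venn diagram: fix $x,y,z\in X$, put $(i,j,t,p)=\partial(x,y,z)$, and split $S$ into its eight cells according to membership in $x,y,z$. The cell sizes are forced by $(i,j,t,p)$: the triple intersection has size $p$; the ``exactly two'' cells have sizes $i-p$, $j-p$, $t-p$; the ``exactly one'' cells have sizes $m-i-j+p$, $m-i-t+p$, $m-j-t+p$ (using $|x|=|y|=|z|=m$); and the cell outside $x\cup y\cup z$ has size $(2m+1)-|x\cup y\cup z|=i+j+t+1-m-p$. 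A short check shows these eight expressions sum to $2m+1$ identically in $i,j,t,p$. Hence $(i,j,t,p)\in\mathcal I_m$ if and only if all eight are $\ge 0$: necessity is immediate, and conversely one partitions $S$ into eight blocks of these sizes and lets $x$ (resp.\ $y$, $z$) be the union of the four blocks whose label contains $x$ (resp.\ $y$, $z$); then $\partial(x,y,z)=(i,j,t,p)$ by inspection.

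Next I would unpack the eight inequalities. They rearrange into a lower and an upper bound on $p$, namely $\max\{0,i+j-m,i+t-m,j+t-m\}\le p\le\min\{i,j,t,i+j+t+1-m\}$, which is the $p$-condition in \eqref{eq3}. An integer $p$ lies in this range for suitable values iff every lower endpoint is $\le$ every upper endpoint; writing out these sixteen inequalities, all but a few are redundant, and the survivors are exactly $0\le i\le m$, $0\le j\le m$, $i+j-m\le t$, $m-1-i-j\le t$, $t\le m-i+j$, and $t\le m-j+i$, i.e.\ the bounds on $i,j$ together with $\max\{i+j-m,\,m-1-i-j\}\le t\le m-|i-j|$ (the condition $t\ge 0$ is automatic, since at least one of $i+j-m$ and $m-1-i-j$ is $\ge 0$). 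This proves the description \eqref{eq3}.

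For the cardinality, reparametrize $(i,j,t,p)\in\mathcal I_m$ by $(p,a,b,c,g)$ with $a=i-p$, $b=j-p$, $c=t-p$, and $g=i+j+t+1-m-p$ the outer cell size. By the above, this identifies $\bigsqcup_{m\ge0}\mathcal I_m$ with the set of $(p,a,b,c,g)\in\mathbb Z_{\ge0}^{5}$ satisfying $g\le p+1+\min\{a,b,c\}$ (the conditions $m-i-j+p\ge0$, $m-i-t+p\ge0$, $m-j-t+p\ge0$ become $g\le p+c+1$, $g\le p+b+1$, $g\le p+a+1$), where $m=2p+a+b+c+1-g$. Hence
\[
\sum_{m\ge0}|\mathcal I_m|\,x^{m}=\sum_{\substack{(p,a,b,c,g)\in\mathbb Z_{\ge0}^{5}\\ g\le p+1+\min\{a,b,c\}}}x^{\,2p+a+b+c+1-g}.
\]
Summing the inner geometric series over $g\in\{0,1,\dots,p+1+\min\{a,b,c\}\}$ turns the right side into
\[
\frac{1}{1-x}\sum_{p,a,b,c\ge0}\Bigl(x^{\,p+a+b+c-\min\{a,b,c\}}-x^{\,2p+a+b+c+2}\Bigr),
\]
and inserting $\sum_{a,b,c\ge0}x^{a+b+c-\min\{a,b,c\}}=\frac{1+x+x^{2}}{(1-x)^{3}(1+x)}$ — which follows by writing $a=\mu+a'$ etc.\ with $\mu=\min\{a,b,c\}$ and using $\sum_{\min\{a',b',c'\}=0}x^{a'+b'+c'}=\frac{1-x^{3}}{(1-x)^{3}}$ by inclusion--exclusion — the two parts combine, all extraneous factors cancel, and the sum collapses to $(1-x)^{-5}$. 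Therefore $|\mathcal I_m|=[x^{m}](1-x)^{-5}=\binom{m+4}{4}$.

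The Venn-cell reformulation and the inequality bookkeeping are routine once set up; I expect the main obstacle to be the cardinality count, and within it the $\min\{a,b,c\}$ that surfaces after summing out $g$ — the identity above is exactly what makes the generating function collapse cleanly. (A generating-function-free alternative is to sum the number of admissible $p$ over the admissible triples $(i,j,t)$ and then evaluate the two resulting period-$2$ quasi-polynomial sums by hand, but this demands more care.)
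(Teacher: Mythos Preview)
Your proof is correct, and it takes a genuinely different route from the paper on both halves of the proposition.

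For the equality $\mathcal I_m=\mathcal I'_m$, the paper only proves the inclusion $\mathcal I_m\subseteq\mathcal I'_m$ directly (via essentially the same inequalities you extract from the Venn cells), and then deduces the reverse inclusion \emph{indirectly}: it first shows $|\mathcal I'_m|=\binom{m+4}{4}$, and then argues that if $\mathcal I_m\subsetneq\mathcal I'_m$ one would get $\dim T\le\dim\mathcal A=|\mathcal I_m|<\binom{m+4}{4}$, contradicting the cited result (Lemma~\ref{lem6}, from \cite{klw}) that $\dim T=\binom{m+4}{4}$. Your Venn-diagram construction gives both inclusions at once and is entirely self-contained; in particular your proof of Proposition~\ref{pro1} does not depend on Lemma~\ref{lem6} or on the inclusion $T\subseteq\mathcal A$.

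For the cardinality, the paper proceeds by induction on $m$: it slices $\mathcal I'_k$ into subsets $\mathcal B_{i,j}$ indexed by the first two coordinates, evaluates the ``diagonal'' piece $\sum_i|\mathcal B_{i,i}|$ as a quasi-polynomial in $k$, and identifies the off-diagonal pieces with the analogous slices of $\mathcal I'_{k-1}$ to close the recursion. Your generating-function argument (reparametrize by the Venn-cell sizes, sum out the outer cell, and use the identity $\sum_{a,b,c\ge0}x^{a+b+c-\min\{a,b,c\}}=\frac{1+x+x^2}{(1-x)^3(1+x)}$) is cleaner and explains the answer: the collapse to $(1-x)^{-5}$ reflects that, up to the single linear constraint $g\le p+1+\min\{a,b,c\}$, you are counting $5$-tuples of nonnegative integers. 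The paper's induction is more elementary in the sense of avoiding generating functions, but it is considerably less transparent and, as noted, the paper's overall argument for the proposition is not self-contained.
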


We remark here that the proof of Proposition \ref{pro1} will be given later.

Let ${\rm Aut}(O_{m+1})$ denote the automorphism group of $O_{m+1}$. The action of  ${\rm Aut}(O_{m+1})$ on $X$ naturally induces the action of  ${\rm Aut}(O_{m+1})$ on $X\times X\times X$:
$\sigma(x,y,z)=(\sigma x,\sigma y,\sigma z)$ for every $\sigma \in {\rm Aut}(O_{m+1})$ and every $(x,y,z)\in X\times X\times X$.
Let ${\rm Aut}_{x_0}(O_{m+1})$ denote the stabilizer of $x_0$ in  ${\rm Aut}(O_{m+1})$. Naturally, we also have the action  of ${\rm Aut}_{x_0}(O_{m+1})$ on $x_0\times X\times X$.
For each $(i,j,t,p)\in \mathcal{I}_m$, we further define
\begin{align}
X_{(i,j,t,p)}=\{(x,y,z)\in X\times X\times X\mid \partial(x,y,z)=(i,j,t,p)\} \
\end{align}
and
\begin{align}
 X^{x_0}_{(i,j,t,p)}=\{(x_0,x,y)\in x_0\times X\times X\mid \partial(x_0,x,y)=(i,j,t,p)\}.\label{eq4}
\end{align}
Observe that $X^{x_0}_{(i,j,t,p)}\subseteq  X_{(i,j,t,p)}$.
In what follows, we  give the meanings of $X_{(i,j,t,p)}$ and $X^{x_0}_{(i,j,t,p)}$, $(i,j,t,p)\in \mathcal{I}_m$.

\begin{pro}\label{pro 15}
The sets {\rm $X_{(i,j,t,p)}$}, $(i,j,t,p)\in \mathcal{I}_m$, are the orbits
of $X\times X\times X$ under the action
of {\rm ${\rm Aut}(O_{m+1})$}.
\end{pro}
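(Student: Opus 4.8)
The plan is to show two things: first, that each $X_{(i,j,t,p)}$ is a union of orbits (indeed that the function $\partial(\cdot,\cdot,\cdot)$ is constant on orbits), which is immediate; and second, that $\mathrm{Aut}(O_{m+1})$ acts transitively on each $X_{(i,j,t,p)}$. For the first part, note that any $\sigma\in\mathrm{Aut}(O_{m+1})$ permutes $X$ and preserves the graph distance $\partial$, and since the Odd graph is distance-transitive the four cardinalities $|x\cap y|$, $|x\cap z|$, $|y\cap z|$, $|x\cap y\cap z|$ are all determined by the mutual distances together with the combinatorics of $m$-subsets; in fact $\mathrm{Aut}(O_{m+1})$ is the symmetric group $\mathrm{Sym}(S)$ acting on $m$-subsets of $S=\{1,\dots,2m+1\}$ (this is classical, see \cite{bcn}), so set intersections are literally preserved. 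Hence $\partial(\sigma x,\sigma y,\sigma z)=\partial(x,y,z)$ and each $X_{(i,j,t,p)}$ is $\mathrm{Aut}(O_{m+1})$-stable. Combined with the fact that the $X_{(i,j,t,p)}$ partition $X\times X\times X$ (by the very definition of $\mathcal{I}_m$), it suffices to prove transitivity on each block.

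For transitivity, I would identify $\mathrm{Aut}(O_{m+1})$ with $\mathrm{Sym}(S)$ and argue as follows. Fix $(i,j,t,p)\in\mathcal{I}_m$ and two triples $(x,y,z)$ and $(x',y',z')$ in $X_{(i,j,t,p)}$. Each triple partitions $S$ into a Venn-diagram of (at most) eight regions: $x\cap y\cap z$, $x\cap y\setminus z$, $x\cap z\setminus y$, $y\cap z\setminus x$, $x$-only, $y$-only, $z$-only, and the region outside $x\cup y\cup z$. The key point is that the sizes of all eight regions are determined by the four parameters $m$, $i$, $j$, $t$, $p$: for instance $|x\cap y\setminus z|=i-p$, $|x\cap z\setminus y|=j-p$, $|y\cap z\setminus x|=t-p$, $|x\text{-only}|=m-i-j+p$, and so on, and the outside region has size $2m+1-(|x\cup y\cup z|)$. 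Since $(x,y,z)$ and $(x',y',z')$ have the same parameters, their eight region-sizes agree one-for-one, so there is a permutation $\sigma\in\mathrm{Sym}(S)$ mapping each region of the first Venn diagram bijectively onto the corresponding region of the second. Such a $\sigma$ satisfies $\sigma x=x'$, $\sigma y=y'$, $\sigma z=z'$, giving transitivity. (One should double-check that every region-size computed this way is nonnegative precisely when $(i,j,t,p)\in\mathcal{I}_m$, i.e. that the inequalities defining $\mathcal{I}_m$ in \eqref{eq3} are exactly the nonnegativity constraints on the eight regions; this is the content of Proposition \ref{pro1} and can be cited.)

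The main obstacle, such as it is, lies not in the transitivity argument but in being careful about what $\mathrm{Aut}(O_{m+1})$ actually is: one must invoke that $\mathrm{Aut}(O_{m+1})\cong\mathrm{Sym}(2m+1)$ acting on $m$-subsets (valid for $m\ge 2$; the case $m=1$ gives the triangle $K_3$ and is handled directly), rather than merely using abstract distance-transitivity, since distances alone do not pin down all of $|x\cap y|$, $|x\cap z|$, $|y\cap z|$, $|x\cap y\cap z|$ — the triple intersection $p$ in particular is not a function of pairwise distances. Once $\mathrm{Aut}(O_{m+1})$ is identified with the symmetric group, the proof is the elementary Venn-region bookkeeping above. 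I would therefore structure the write-up as: (1) recall $\mathrm{Aut}(O_{m+1})=\mathrm{Sym}(S)$; (2) observe $\partial(\cdot,\cdot,\cdot)$ is $\mathrm{Sym}(S)$-invariant, so the $X_{(i,j,t,p)}$ are unions of orbits; (3) express the eight Venn-region cardinalities of an arbitrary triple in terms of $(m,i,j,t,p)$; (4) conclude that two triples with the same parameters are related by a region-matching permutation, hence each $X_{(i,j,t,p)}$ is a single orbit.
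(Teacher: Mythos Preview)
Your proposal is correct and follows essentially the same route as the paper: identify $\mathrm{Aut}(O_{m+1})$ with $\mathrm{Sym}(S)$, observe that the four intersection cardinalities are preserved, then use the eight Venn regions (whose sizes are determined by $m,i,j,t,p$) to produce the required permutation. The only cosmetic difference is that the paper maps an arbitrary triple to a single canonical representative rather than mapping one triple directly to another, and it does not separately treat $m=1$ or invoke Proposition~\ref{pro1} (the region sizes are automatically nonnegative since the triple $(x,y,z)$ is assumed to exist).
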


\begin{proof}
It is easy to see that the sets {\rm $X_{(i,j,t,p)}$}, $(i,j,t,p)\in \mathcal{I}_m$, form a partition of $X\times X\times X$. By \cite[p. 260]{bcn}, ${\rm Aut}(O_{m+1})$ is sym$(S)$, i.e., it permutes
the $2m+1$ elements in $S$.
Let $x,y,z\in X$ and let $\partial(x,y,z)=(i,j,t,p)$, where $(i,j,t,p)\in \mathcal{I}_m$. By the definitions of $i,j,t$ and $p$, it is easy to see  that $\partial(\sigma x,\sigma y,\sigma z)=(i,j,t,p)$ for any  $\sigma \in {\rm Aut}(O_{m+1})$. This implies $(\sigma x,\sigma y,\sigma z)\in X_{(i,j,t,p)}$ for any  $\sigma \in {\rm Aut}(O_{m+1})$.

To show that {\rm Aut}$(O_{m+1})$ acts transitively on $X_{(i,j,t,p)}$ for each given $(i,j,t,p)\in \mathcal{I}_m$, it suffices to show that for any triple $(x,y,z)$ satisfying $\partial(x,y,z)=(i,j,t,p)$,  there is an automorphism $\sigma\in {\rm Aut}(O_{m+1})$ such that
$(\sigma x,\sigma y,\sigma z)$ is a fixed triple that only depends on $i,j,t$ and $p$. Note
that $|x\cap y|=i,\ |x\cap z|=j,\ |y\cap z|=t\ \text{and}\ |x\cap y\cap z|=p$. Let $A=x\cap y\cap z,\ B=x\cap y-x\cap y\cap z,\ C=x\cap z-x\cap y\cap z,\ D=y\cap z-x\cap y\cap z,\ E=x-y\cup z,\ F=y-x\cup z,\
G=z-x\cup y\ \text{and}\ H=S-x\cup y\cup z$.  We then have $|A|=p,\ |B|=i-p,\ |C|=j-p,\ |D|=t-p,\ |E|=m-i-j+p,\ |F|=m-i-t+p,\ |G|=m-j-t+p\ \text{and}\  |H|=i+j+t-p-m+1$. Moreover, one can readily verify that $x=A\cup B\cup C\cup E$,
$y=A\cup B\cup D\cup F$ and $z=A\cup C\cup D\cup G$.
Pick an automorphism $\sigma\in {\rm Aut}(O_{m+1})$ such that under the action of $\sigma$:
$A\rightarrow\{1,\ldots,p\}$, $B\rightarrow\{p+1,\ldots,i\}$, $C\rightarrow\{i+1,\ldots,i+j-p\}$, $E\rightarrow\{i+j-p+1,\ldots,m\}$, $D\rightarrow\{m+1,\ldots,m+t-p\}$, $F\rightarrow\{m+t-p+1,\ldots,2m-i\}$, $G\rightarrow\{2m-i+1,\ldots,3m-i-j-t+p\}$, $H\rightarrow\{3m-i-j-t+p+1,\ldots,2m+1\}$.
This implies that $\sigma(x)=\sigma(A\cup B\cup C\cup E)=\{1,2,\ldots,m\}$, $\sigma(y)=\sigma(A\cup B\cup D\cup F)=\{1,\ldots,i,m+1,\ldots,2m-i\}$ and $\sigma(z)=\sigma(A\cup C\cup D\cup G)=\{1,\ldots,p,i+1,\ldots,i+j-p,m+1,\ldots,m+t-p,2m-i+1,\ldots,3m-i-j-t+p\}$.
\end{proof}

\begin{pro}\label{cor1}
The sets {\rm $X^{x_0}_{(i,j,t,p)}$}, $(i,j,t,p)\in \mathcal{I}_m$, are the orbits
of $x_0\times X\times X$ under the action
of {\rm ${\rm Aut}_{x_0}(O_{m+1})$}.
\end{pro}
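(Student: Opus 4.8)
The plan is to deduce this from Proposition \ref{pro 15} together with the explicit automorphism constructed in its proof. First I would record two easy facts. Since the four-tuple $\partial(x_0,x,y)$ is invariant under ${\rm Aut}(O_{m+1})$ and ${\rm Aut}_{x_0}(O_{m+1})$ preserves $x_0\times X\times X$, the group ${\rm Aut}_{x_0}(O_{m+1})$ maps each set $X^{x_0}_{(i,j,t,p)}$ into itself. Moreover the sets $X^{x_0}_{(i,j,t,p)}$ with $(i,j,t,p)\in\mathcal{I}_m$ partition $x_0\times X\times X$: they are pairwise disjoint because the corresponding $X_{(i,j,t,p)}$ are, and none is empty because the canonical triple exhibited at the end of the proof of Proposition \ref{pro 15} has first coordinate $\{1,2,\ldots,m\}=x_0$, hence lies in $X^{x_0}_{(i,j,t,p)}$ for the relevant $(i,j,t,p)$.

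It therefore remains to prove that ${\rm Aut}_{x_0}(O_{m+1})$ acts transitively on each $X^{x_0}_{(i,j,t,p)}$. The key observation is that the automorphism $\sigma$ produced in the proof of Proposition \ref{pro 15} for a triple $(x,y,z)$ with $\partial(x,y,z)=(i,j,t,p)$ satisfies $\sigma(x)=\{1,2,\ldots,m\}$. Hence, applying that construction to an element $(x_0,u,v)\in X^{x_0}_{(i,j,t,p)}$, that is, taking $(x,y,z)=(x_0,u,v)$, yields $\sigma\in{\rm Aut}(O_{m+1})$ with $\sigma(x_0)=\{1,2,\ldots,m\}=x_0$, so in fact $\sigma\in{\rm Aut}_{x_0}(O_{m+1})$, and $\sigma$ sends $(x_0,u,v)$ to the fixed triple $(x_0,\sigma(u),\sigma(v))$ that depends only on $i,j,t,p$.

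Given two triples $(x_0,u,v)$ and $(x_0,u',v')$ in $X^{x_0}_{(i,j,t,p)}$, choose $\sigma,\sigma'\in{\rm Aut}_{x_0}(O_{m+1})$ as above carrying each of them to the common canonical triple; then $(\sigma')^{-1}\sigma\in{\rm Aut}_{x_0}(O_{m+1})$ carries $(x_0,u,v)$ to $(x_0,u',v')$, proving transitivity. Combining transitivity with the first paragraph shows that the $X^{x_0}_{(i,j,t,p)}$ are precisely the ${\rm Aut}_{x_0}(O_{m+1})$-orbits on $x_0\times X\times X$.

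As for difficulty, there is essentially no new obstacle here: the substantive work was already done in Proposition \ref{pro 15}, and the only point requiring care is to notice that the explicit $\sigma$ constructed there automatically fixes $x_0$ whenever it is applied to a triple whose first entry is $x_0$, so that no separate argument for the stabilizer is needed. (If one preferred not to reuse the explicit construction, one could instead argue via a double-coset/orbit-counting comparison between the ${\rm Aut}(O_{m+1})$-orbits on $X\times X\times X$ and the ${\rm Aut}_{x_0}(O_{m+1})$-orbits on $x_0\times X\times X$, but reusing the construction is the cleanest route.)
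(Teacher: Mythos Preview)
Your proposal is correct and follows the same approach as the paper, which simply says ``Immediate from Proposition \ref{pro 15}.'' You have merely spelled out in detail why the claim is immediate, correctly observing that the explicit $\sigma$ built in that proof sends the first vertex of the triple to $\{1,\ldots,m\}=x_0$ and hence lies in ${\rm Aut}_{x_0}(O_{m+1})$ when applied to a triple with first entry $x_0$.
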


\begin{proof}
Immediate from Proposition \ref{pro 15}.
\end{proof}

\begin{defi}\label{def1}
For each $(i,j,t,p)\in \mathcal{I}_m$, define the matrix $M^{t,p}_{i,j}\in {\rm{Mat}}_X(\mathbb{C})$ by
\begin{equation}\nonumber
(M^{t,p}_{i,j})_{xy}=\left\{\begin{array}{ll} 1 &\text{if}\ (x_0,x, y)\in X^{x_0}_{(i,j,t,p)},\\
 0 &\text{otherwise } \end{array}\right.
\ \ (x, y\in X).
\end{equation}
\end{defi}

Observe that the transpose of $M^{t,p}_{i,j}$ is $M^{t,p}_{j,i}$ and the matrices $M^{t,p}_{i,j},\ (i,j,t,p)\in \mathcal{I}_m$, are linearly independent.
 Moreover, it follows from Proposition \ref{cor1} that each $M^{t,p}_{i,j}$ is invariant under permutating the rows and columns by elements of  ${\rm Aut}_{x_0}(O_{m+1})$.
Let $\mathcal{A}$ be the linear space over $\mathbb{C}$ spanned by the matrices $M^{t,p}_{i,j},\ (i,j,t,p)\in \mathcal{I}_m$. It is known that $\mathcal{A}$ is a matrix
algebra over $\mathbb{C}$ called the {\it{centralizer\ algebra}} (cf. \cite{ban}) of ${\rm Aut}_{x_0}(O_{m+1})$.

For the rest of this paper,
 let $A_1$ and $E^*_i:=E^*_i(x_0)$ $(0\leq i\leq m)$ denote the adjacency  matrix and the  $i$-th dual idempotent of $O_{m+1}$, respectively.
Next, we shall show that the algebras  $T$ and {\rm $\mathcal{A}$}  coincide.

\begin{pro}\label{lem2} With notation as above, the following {\rm (i)--(v)} hold.
\begin{itemize}
\item[\rm (i)] For each $0\leq i\leq m$,
\begin{equation}\label{eq5}
E^*_i=\left\{\begin{array}{ll} M^{m,\frac{2m-i}{2}}_{\frac{2m-i}{2},\frac{2m-i}{2}} &\text{if $i$ is even},\\[.3cm]
 M^{m,\frac{i-1}{2}}_{\frac{i-1}{2},\frac{i-1}{2}} &\text{if $i$ is odd}.
\end{array}\right.
\end{equation}
\item[\rm (ii)] For each $0\leq i\leq m-1$,
\begin{equation}\label{eq6}
E^*_{i+1}A_1E^*_i=\left\{\begin{array}{ll} M^{0,0}_{\frac{i}{2},\frac{2m-i}{2}} &\text{if $i$ is even},\\[.3cm]
 M^{0,0}_{\frac{2m-i-1}{2},\frac{i-1}{2}} &\text{if $i$ is odd}.
\end{array}\right.
\end{equation}
\item[\rm (iii)] For each $0\leq i\leq m-1$,
\begin{equation}\nonumber
E^*_iA_1E^*_{i+1}=\left\{\begin{array}{ll} M^{0,0}_{\frac{2m-i}{2},\frac{i}{2}} &\text{if $i$ is even},\\[.3cm]
 M^{0,0}_{\frac{i-1}{2},\frac{2m-i-1}{2}} &\text{if $i$ is odd}.
\end{array}\right.
\end{equation}
\item[\rm (iv)]
\begin{equation}\nonumber
E^*_{m}A_1E^*_m=M^{0,0}_{\lfloor\frac{m}{2}\rfloor,\lfloor\frac{m}{2}\rfloor}.
\end{equation}
\item[\rm (v)]
\begin{align*}
A_1=\sum^m_{\stackrel{i=0}{i\ even}}(M^{0,0}_{\frac{i}{2},\frac{2m-i}{2}}+M^{0,0}_{\frac{2m-i}{2},\frac{i}{2}})+
\sum^m_{\stackrel{i=0}{i\ odd}}(M^{0,0}_{\frac{2m-i-1}{2},\frac{i-1}{2}}+M^{0,0}_{\frac{i-1}{2},\frac{2m-i-1}{2}})+
M^{0,0}_{\lfloor\frac{m}{2}\rfloor,\lfloor\frac{m}{2}\rfloor}
\end{align*}
\end{itemize}
\end{pro}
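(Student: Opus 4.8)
The plan is to verify each identity by directly computing the relevant matrix entries and recognizing them as one of the indicator matrices $M^{t,p}_{i,j}$. Throughout, recall that for a pair $(x,y)$ the entry $(M^{t,p}_{i,j})_{xy}$ equals $1$ precisely when $\partial(x_0,x,y)=(i,j,t,p)$, i.e. when $|x_0\cap x|=i$, $|x_0\cap y|=j$, $|x\cap y|=t$, and $|x_0\cap x\cap y|=p$; and that by Proposition \ref{pro1} the quadruple lies in $\mathcal I_m$. The key translation device is the distance formula \eqref{eq1}: since $O_{m+1}$ has diameter $m$, a vertex $x$ has $\partial(x_0,x)=i$ iff either $i$ is even and $|x_0\cap x|=m-i/2$ (so $i=2m-2|x_0\cap x|$, using $|x_0\cap x|\ge\lfloor(m-1)/2\rfloor+1$), or $i$ is odd and $|x_0\cap x|=(i-1)/2$ (so $i=2|x_0\cap x|+1$, using $|x_0\cap x|\le\lfloor(m-1)/2\rfloor$). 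One should check that these two cases are exhaustive and non-overlapping on $0\le i\le m$, which is where the almost-bipartite structure enters.

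For part (i): $E^*_i$ is the diagonal matrix with $(x,x)$-entry $1$ iff $\partial(x_0,x)=i$. Comparing with $M^{t,p}_{i',j'}$ evaluated on the diagonal $x=y$, we need $|x_0\cap x|=i'=j'$, $|x\cap x|=|x|=m=t$, and $|x_0\cap x\cap x|=|x_0\cap x|=p$, so $p=i'$. Plugging in $i'=(2m-i)/2$ when $i$ is even and $i'=(i-1)/2$ when $i$ is odd, via the distance translation above, gives exactly \eqref{eq5}. For parts (ii) and (iii): the $(x,y)$-entry of $E^*_{i+1}A_1E^*_i$ is $1$ iff $\partial(x_0,x)=i+1$, $\partial(x_0,y)=i$, and $\partial(x,y)=1$, i.e. $x\cap y=\varnothing$, which forces $t=0$ and hence $p=0$ as well. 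Now translate $\partial(x_0,x)=i+1$ and $\partial(x_0,y)=i$ into conditions on $|x_0\cap x|$ and $|x_0\cap y|$ using the case analysis: when $i$ is even, $i+1$ is odd, so $|x_0\cap x|=i/2$ and $|x_0\cap y|=(2m-i)/2$; when $i$ is odd, $i+1$ is even, so $|x_0\cap x|=(2m-i-1)/2$ and $|x_0\cap y|=(i-1)/2$. This yields \eqref{eq6} and its transpose statement (iii), the latter also following formally from (ii) by transposing, since $(M^{t,p}_{i,j})^{\rm t}=M^{t,p}_{j,i}$. Part (iv) is the same computation at the top row/column, where $\partial(x_0,x)=\partial(x_0,y)=m$ and again $x\cap y=\varnothing$; one checks that $|x_0\cap x|=|x_0\cap y|=\lfloor m/2\rfloor$ regardless of the parity of $m$ (using that $m$ and $m+1$ cannot both be ``odd-type'' or both ``even-type''), giving $M^{0,0}_{\lfloor m/2\rfloor,\lfloor m/2\rfloor}$. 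Finally, part (v) follows by writing $A_1=\sum_{i=0}^{m}E^*_i A_1 I=\sum_{i=0}^{m}\sum_{j=0}^{m}E^*_j A_1 E^*_i$ and invoking Lemma \ref{lem1}(i) (with $W=V$), which shows $E^*_j A_1 E^*_i=0$ unless $|i-j|\le 1$; the surviving terms $E^*_{i+1}A_1E^*_i$, $E^*_iA_1E^*_{i+1}$ ($0\le i\le m-1$) and $E^*_m A_1 E^*_m$ are then substituted from (ii)--(iv), and one notes there is no $E^*_iA_1E^*_i$ term for $i<m$ because $O_{m+1}$ has no triangles through intermediate vertices — equivalently $p^i_{11}=0$ for $i<m$, a standard fact for the Odd graph.

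The main obstacle is purely bookkeeping: getting the parity case-split in the distance formula exactly right and confirming that the formulas $(2m-i)/2$, $(i-1)/2$, etc. always produce integers in the valid range and land inside $\mathcal I_m$. In particular one must be careful at the boundary $i\approx m$, where the two branches of \eqref{eq1} meet, to see that (iv) is genuinely a single matrix and not a sum of two. No deep idea is needed beyond the dictionary ``distance in $O_{m+1}$ $\leftrightarrow$ intersection size,'' so the argument is a finite verification once that dictionary is set up carefully.
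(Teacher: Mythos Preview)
Your proposal is correct and follows essentially the same entry-by-entry verification as the paper's own proof, including obtaining (iii) by transposition and assembling (v) from (ii)--(iv) via the decomposition $A_1=\sum_{i,j}E^*_jA_1E^*_i$. One small caveat: Lemma~\ref{lem1}(i) as stated is for an \emph{irreducible} $T$-module, so invoking it with $W=V$ is not quite licit; for (v) it is cleaner to argue, as the paper does, directly from the almost-bipartite property (equivalently, from $p^j_{1i}=0$ for $|i-j|>1$ and $p^i_{1i}=0$ for $i<m$).
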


\begin{proof}
(i) For even $i\ (0\leq i\leq m)$ and for $x,y\in X$, we consider the $(x,y)$-entries of matrices at both sides of \eqref{eq5}.
By definitions and  \eqref{eq1},
it is easy to see that
$(E^*_i)_{xy}=(M^{m,\frac{2m-i}{2}}_{\frac{2m-i}{2},\frac{2m-i}{2}})_{xy}=1$ if $x=y$, $|x_0\cap x |=\frac{2m-i}{2}$, and 0 otherwise.  This implies \eqref{eq5}  holds for even $i$. Similarly,  for odd $i\ (0\leq i\leq m)$ and for $x,y\in X$,
we have $(E^*_i)_{xy}=(M^{m,\frac{i-1}{2}}_{\frac{i-1}{2},\frac{i-1}{2}})_{xy}=1$ if $x=y$, $|x_0\cap x |=\frac{i-1}{2}$, and 0 otherwise. This implies \eqref{eq5}  holds for odd $i$.

(ii) Similar to the proof of (i): for even $i\ (0\leq i\leq m-1)$ and for $x,y\in X$, we have
$(E^*_{i+1}A_1E^*_i)_{xy}=(M^{0,0}_{\frac{i}{2},\frac{2m-i}{2}})_{xy}=1$ if $|x_0\cap x |=\frac{i}{2}$,$|x_0\cap y|=\frac{2m-i}{2}$, $x\cap y=\emptyset$, and 0 otherwise; for odd $i\ (0\leq i\leq m-1)$ and for $x,y\in X$, we have
$(E^*_{i+1}A_1E^*_i)_{xy}=( M^{0,0}_{\frac{2m-i-1}{2},\frac{i-1}{2}})_{xy}=1$ if $|x_0\cap x |=\frac{2m-i-1}{2}$,$|x_0\cap y|=\frac{i-1}{2}$, $x\cap y=\emptyset$, and 0 otherwise.

(iii) Take transpose of matrices at  both sides of \eqref{eq6}.

(iv) Similar to the proof of (ii). Note that the matrix
$M^{0,0}_{\lfloor\frac{m}{2}\rfloor,\lfloor\frac{m}{2}\rfloor}$ is equal to $M^{0,0}_{\frac{m}{2},\frac{m}{2}}$ if $m$ is even, and  equal to $M^{0,0}_{\frac{m-1}{2},\frac{m-1}{2}}$ if $m$ is odd.

(v)  Note that  $O_{m+1}$ is almost-bipartite. From the equation
\begin{align*}
A_1=\big(\sum^m_{i=0}E^*_i\big)A_1\big(\sum^m_{i=0}E^*_i\big)=\sum^{m-1}_{i=0}(E^*_{i+1}A_1E^*_i+E^*_iA_1E^*_{i+1})+
E^*_mA_1E^*_m
\end{align*}
and (ii)--(iv) above, the result follows.
\end{proof}

\begin{lem}\label{lem3}
The algebra $T$ is a subalgebra of $\mathcal{A}$.
\end{lem}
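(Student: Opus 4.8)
The plan is to show that the generators of $T$ lie in $\mathcal{A}$; since $T$ is, by definition, the subalgebra of $\mathrm{Mat}_X(\mathbb{C})$ generated by the Bose--Mesner algebra $M$ and the dual Bose--Mesner algebra $M^* = M^*(x_0)$, and $\mathcal{A}$ is an algebra, it suffices to exhibit a generating set of $T$ inside $\mathcal{A}$. Recall that $M$ is generated by the single adjacency matrix $A_1$, and $M^*$ is spanned by the dual idempotents $E^*_0, E^*_1, \ldots, E^*_m$. Hence $T$ is generated by $\{A_1\} \cup \{E^*_i : 0 \le i \le m\}$, and it is enough to check that each of these matrices belongs to $\mathcal{A}$.

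First I would invoke Proposition \ref{lem2}(i), which expresses each dual idempotent $E^*_i$ as a single matrix $M^{t,p}_{i',j'}$ with $(i',j',t,p) \in \mathcal{I}_m$; by definition these matrices span $\mathcal{A}$, so $E^*_i \in \mathcal{A}$ for all $0 \le i \le m$. Next I would invoke Proposition \ref{lem2}(v), which writes $A_1$ as an explicit $\mathbb{C}$-linear combination of matrices of the form $M^{0,0}_{i',j'}$ with $(i',j',0,0) \in \mathcal{I}_m$; since $\mathcal{A}$ is closed under addition and scalar multiplication, this shows $A_1 \in \mathcal{A}$. (Strictly, one should note that the four-tuples appearing as subscripts in Proposition \ref{lem2} do lie in $\mathcal{I}_m$, so that the matrices in question are genuinely among the spanning matrices of $\mathcal{A}$; this is a routine consistency check using the description \eqref{eq3}, and the values of $t$ and $p$ chosen there — namely $t = m$, $p = \lfloor \cdot \rfloor$ for the idempotents and $t = p = 0$ for the off-diagonal pieces — are exactly the extreme cases permitted by the constraints on $t$ and $p$.)

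Having placed a generating set of $T$ inside $\mathcal{A}$, I would conclude as follows: $\mathcal{A}$ is a subalgebra of $\mathrm{Mat}_X(\mathbb{C})$ containing $A_1$ and all the $E^*_i$, hence it contains the subalgebra generated by these matrices, which is precisely $T$; therefore $T \subseteq \mathcal{A}$, i.e.\ $T$ is a subalgebra of $\mathcal{A}$. The only genuine content is Proposition \ref{lem2}, which is already established, so this lemma is essentially a corollary; there is no real obstacle beyond bookkeeping. The one point that deserves a word of care is the direction of the inclusion being claimed — this lemma gives only $T \subseteq \mathcal{A}$, and the reverse inclusion $\mathcal{A} \subseteq T$ (needed for the equality asserted in the introduction and in Theorem \ref{thm1}) will require a separate argument, presumably the orbit-counting / rank computation showing that $\dim T = |\mathcal{I}_m| = \binom{m+4}{4} = \dim \mathcal{A}$, combined with this lemma to force equality.
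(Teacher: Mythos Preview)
Your proposal is correct and follows essentially the same approach as the paper: the paper's proof is the one-liner ``Immediate from Proposition \ref{lem2}(i),(v) since $T$ is generated by the matrices $A_1$, $E^*_0,E^*_1,\ldots,E^*_m$,'' which is precisely the argument you give. Your additional remarks about verifying that the four-tuples lie in $\mathcal{I}_m$ and about the direction of the inclusion are accurate elaborations but not required.
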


\begin{proof}
 Immediate from Proposition \ref{lem2}(i),(v) since $T$ is generated by the matrices $A_1$, $E^*_0,E^*_1,\ldots,E^*_m$.
\end{proof}

\begin{lem}{\rm (\cite[Corollary 3.7]{klw})}\label{lem6}
The dimension of $T$ is ${m+4\choose 4}$ for $m\geq 1$.
\end{lem}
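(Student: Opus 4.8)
The plan is to prove the inclusion $\mathcal{A}\subseteq T$; combined with $T\subseteq\mathcal{A}$ (Lemma \ref{lem3}) this yields $T=\mathcal{A}$, and then $\dim T=\dim\mathcal{A}=|\mathcal{I}_m|=\binom{m+4}{4}$ by Proposition \ref{pro1}, since the matrices $M^{t,p}_{i,j}$, $(i,j,t,p)\in\mathcal{I}_m$, span $\mathcal{A}$ by definition and are linearly independent (as observed after Definition \ref{def1}). Using $I=\sum_aE^*_a$ and the orthogonality of the $E^*_a$, we may split $T=\bigoplus_{a,b}E^*_aTE^*_b$ and $\mathcal{A}=\bigoplus_{a,b}E^*_a\mathcal{A}E^*_b$, so the task becomes showing $E^*_aTE^*_b=E^*_a\mathcal{A}E^*_b$ for all $0\le a,b\le m$.

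For a self-contained argument I would work blockwise. Proposition \ref{lem2} already places the dual idempotents $E^*_a$ and the tridiagonal pieces $E^*_{a\pm1}A_1E^*_a$, $E^*_mA_1E^*_m$ inside the family $\{M^{t,p}_{i,j}\}$ (the ones with $t=p=0$, and the diagonal ones). Since $M$ is generated by $A_1$, every distance matrix $A_k$ lies in $T$, and hence so does every chain product $C=E^*_{a_0}A_{k_1}E^*_{a_1}A_{k_2}\cdots A_{k_\ell}E^*_{a_\ell}$. Such a $C$ is invariant under conjugation by ${\rm Aut}_{x_0}(O_{m+1})$, so by Proposition \ref{cor1} it is a nonnegative-integer combination of those $M^{t,p}_{i,j}$ whose indices $i,j$ are the intersection sizes attached to the distances $a_0,a_\ell$ via \eqref{eq1}; in the shortest case one gets $E^*_{a_0}A_kE^*_{a_\ell}=\sum_{p}M^{t,p}_{i,j}$, the sum over all $p$ with $(i,j,t,p)\in\mathcal{I}_m$ (here $t$ too is determined by $k$). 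So $T$ contains all of these aggregates over the triple-intersection parameter $p$, and the remaining, essential step is to \emph{separate} the $p$'s: one must show that inside each block the chain products span all of $E^*_a\mathcal{A}E^*_b$, i.e.\ that the linear system expressing the $C$'s in terms of the $M^{t,p}_{i,j}$ has full rank. I would try to do this by routing a chain through a carefully chosen intermediate constituent $E^*_l$ and tracking how the eight Venn regions of $(x_0,x,y)$ refine when a third vertex at prescribed distances is adjoined, aiming to make the resulting system triangular in $p$ so that the $M^{t,p}_{i,j}$ can be extracted one at a time. An alternative route sidesteps this computation entirely: by the Remark in the introduction, $O_{m+1}$ and the Johnson graph $J(2m+1;m)$ have the same dual Bose-Mesner algebra (the distance spheres around $x_0$ coincide, being the sets $\{x:|x_0\cap x|=s\}$) and the same Bose-Mesner algebra (the distance-$m$ graph of $J(2m+1;m)$ is $O_{m+1}$, which has diameter $m$, so its adjacency matrix generates all of $M$), hence the same Terwilliger algebra; one may then invoke the theorem of Tan \emph{et al.}\ \cite{tyy} that the Terwilliger algebra of $J(N;m)$ with $2m\le N$ coincides with its centralizer algebra, whose dimension is again $|\mathcal{I}_m|=\binom{m+4}{4}$.

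The main obstacle, for a proof staying within the paper's ``viewpoint of group action'', is exactly this separation step: proving that the chain products exhaust $E^*_a\mathcal{A}E^*_b$ for every $a,b$ is the combinatorial core, and it is precisely what \cite{klw} carries out via Kronecker products of intersection matrices. Finally I note one more route to the same count, from the other side: semisimplicity of $T$ gives $T\cong\bigoplus_k{\rm Mat}_{d_k}(\mathbb{C})$, hence $\dim T=\sum_k d_k^2$ summed over the isomorphism classes of irreducible $T$-modules occurring in $V$; inserting the classification of these modules — their thinness and the explicit list of admissible pairs (dual endpoint, diameter), cf.\ Theorem \ref{lem5} — reduces the lemma to the numerical identity $\sum_k d_k^2=\binom{m+4}{4}$ with each $d_k$ one more than the diameter of the corresponding module. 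This is appealing but not logically prior, since it presupposes the module classification established later.
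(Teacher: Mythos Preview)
The paper does not give its own proof of this lemma: it is imported verbatim from \cite[Corollary 3.7]{klw}, and the paper's logic actually runs in the \emph{opposite} direction to your proposal. The paper uses Lemma~\ref{lem6} as a black box to establish both Proposition~\ref{pro1} (the final paragraph of that proof derives $\mathcal{I}_m=\mathcal{I}'_m$ by contradiction from $\dim T=\binom{m+4}{4}$) and Theorem~\ref{thm1} (the equality $T=\mathcal{A}$ is obtained from $T\subseteq\mathcal{A}$ together with the dimension count). Consequently your opening plan---prove $\mathcal{A}\subseteq T$, deduce $T=\mathcal{A}$, then invoke Proposition~\ref{pro1} for $|\mathcal{I}_m|=\binom{m+4}{4}$---is circular as written, since the paper's proof of Proposition~\ref{pro1} already assumes the lemma you are proving.

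The circularity is not fatal, only a matter of reorganisation. The purely combinatorial portion of the proof of Proposition~\ref{pro1}, namely $\mathcal{I}_m\subseteq\mathcal{I}'_m$ and $|\mathcal{I}'_m|=\binom{m+4}{4}$, makes no use of Lemma~\ref{lem6}; only the closing step $\mathcal{I}_m=\mathcal{I}'_m$ does, and that step can be replaced by the explicit construction already present in the proof of Proposition~\ref{pro 15} (the eight Venn-region cardinalities $|A|,\ldots,|H|$ are nonnegative and sum to $2m+1$ precisely when $(i,j,t,p)\in\mathcal{I}'_m$, so every such tuple is realised by some triple). With that repair your route~1 reduces, as you correctly note, to the ``separation of $p$'' problem, which is exactly the computation carried out in \cite{klw}. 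Your route~2 via the identification of the Terwilliger algebras of $O_{m+1}$ and $J(2m+1;m)$ together with \cite{tyy} is valid and genuinely different from both this paper and \cite{klw}; the paper alludes to this connection in remark~(i) of the introduction but does not exploit it. Your diagnosis of route~3 as logically posterior is also correct: the proof of Theorem~\ref{lem5} explicitly appeals to $\dim T=\binom{m+4}{4}$.
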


To show that $T$ is the same as $\mathcal{A}$,
it suffices to show that the dimension of $\mathcal{A}$ is also ${m+4\choose 4}$ by lemmas \ref{lem3} and \ref{lem6}.  From the above discussion, we know that the dimension of $\mathcal{A}$ is clearly $|\mathcal{I}_m|$ and therefore we now turn to Proposition {\rm \ref{pro1}} and give its proof below.

\begin{proof}[\bfseries\rm\bf Proof of Proposition \ref{pro1}]
Let $\mathcal{I}_m'$ denote the set on the right-hand side of \eqref{eq3}. In the following,   we first  show $\mathcal{I}_m\subseteq\mathcal{I}_m'$.
For each $(i,j,t,p)\in \mathcal{I}_m$, observe that
\begin{align}\label{eq8}
0\leq p\leq i,j,t\leq m
\end{align}
by \eqref{eq2}.
Let $x,y,z\in X$ and let $\partial(x,y,z)=(i,j,t,p)$. Then we have
\begin{align*}
\partial_H(x,y)=2m-2i,\ \partial_H(x,z)=2m-2j,\ \partial_H(y,z)=2m-2t,
\end{align*}
where $\partial_H(u,v)=2m-2|u\cap v|$ denotes the Hamming\ distance between  $u$ and $v\ (u,v\in X)$. From the two inequalities $\partial_H(y,z)\leq \partial_H(x,y)+\partial_H(x,z)$  and $|\partial_H(x,y)-\partial_H(x,z)|\leq \partial_H(y,z)$, it follows that
$i+j-m\leq t\leq m-|i-j|$.
Moreover,
it is easy to see that $|y\cup z|\leq |S|-(|x|-i-j)$ if $i+j\leq m-1$, which implies
$m-1-i-j\leq t$.
Combine the above two inequalities involving $t$ to obtain
\begin{align}\label{eq9}
\mathrm{max}\{i+j-m,m-1-i-j\}\leq t\leq m-|i-j|.
\end{align}
By using the three inequalities
\begin{align*}
&i-p=|x\cap y-z|\leq |x-z|=m-j,\\
&i-p=|x\cap y-z|\leq |y-z|=m-t,\\
&j-p=|x\cap z-y|\leq |z-y|=m-t,
\end{align*}
we obtain $\mathrm{max}\{i+j-m,i+t-m,j+t-m\}\leq p$.
Moreover, we have $p\leq i+j+t+1-m$ since $|x\cup y\cup z|\leq 2m+1$. Combine the above two inequalities involving $p$ to obtain
\begin{align}\label{eq12}
\mathrm{max}\{i+j-m,i+t-m,j+t-m\}\leq p\leq i+j+t+1-m.
\end{align}
 By \eqref{eq8}--\eqref{eq12}, we have $(i,j,t,p)\in \mathcal{I}'_m$ for each $(i,j,t,p)\in \mathcal{I}_m$
 and therefore
\begin{align*}
\mathcal{I}_m\subseteq\mathcal{I}_m'.
\end{align*}

We have shown $\mathcal{I}_m\subseteq\mathcal{I}'_m$ in the above. Next, we  show
\begin{equation}\label{eq13}
|\mathcal{I}'_m|={m+4\choose 4}\ \ \text{for}\ \ m\geq 1
\end{equation}
by induction on $m$.
It is easy to compute $|\mathcal{I}'_1|=5$, which implies that \eqref{eq13} holds for $m=1$. We assume
that \eqref{eq13} holds for $m=k-1\ (k\geq 2)$, that is,
\begin{equation}\label{eq14}
|\mathcal{I}'_{k-1}|={k+3\choose 4}.
\end{equation}
To compute the cardinality of $\mathcal{I}'_{k}$, we define some subsets of $\mathcal{I}'_{k}$ as follows: for each $i\ (0\leq i\leq k)$ and each $l\ (1\leq l\leq i)$, let
\begin{align*}
&\mathcal{B}_{i,i}=\{(i,i,t,p)\mid(i,i,t,p)\in \mathcal{I}'_{k}\}, \\
&\mathcal{B}_{i,i-l}=\{(i,i-l,t,p)\mid(i,i-l,t,p)\in \mathcal{I}'_{k}\},\\
& \mathcal{B}_{i-l,i}=\{(i-l,i,t,p)\mid(i-l,i,t,p)\in \mathcal{I}'_{k}\}.
\end{align*}
Observe that $|\mathcal{B}_{i,i-l}|=|\mathcal{B}_{i-l,i}|$ and all these  subsets are pairwise disjoint. Hence, we have
\begin{align}\label{eq15}
|\mathcal{I}'_k|=\sum^k_{i=0}|\mathcal{B}_{i,i}|+\sum^k_{i=1}\sum^i_{l=1}|\mathcal{B}_{i,i-l}|+
\sum^k_{i=1}\sum^i_{l=1}|\mathcal{B}_{i-l,i}|.
\end{align}
By the definition of $\mathcal{I}'_k$, it is not difficult to compute
\begin{align*}
|\mathcal{B}_{i,i}|=\left\{\begin{array}{ll} (i+1)(i+2) &\text{if $0\leq i\leq \lfloor\frac{k-1}{2}\rfloor$},\\[0.1cm]
(k+1-i)^2 &\text{if $\lfloor\frac{k-1}{2}\rfloor+1\leq i\leq k$}.
 \end{array}\right.
\end{align*}
Then we obtain
\begin{align}\label{eq18}
\sum^k_{i=0}|\mathcal{B}_{i,i}|=\left\{\begin{array}{ll} \frac{(k+1)(k+3)(2k+7)}{24} &\text{if $k$ is odd},\\[0.1cm]
\frac{(k+2)(k+4)(2k+3)}{24} &\text{if $k$ is even}.
 \end{array}\right.
\end{align}
Furthermore, use the definition of $\mathcal{I}'_k$ again to verify that for each $i\ (1\leq i\leq k)$ and each $l\ (1\leq l\leq i)$, both
\begin{align*}
&|\mathcal{B}_{i,i-l}|=|\{(i-1,i-l,t,p):(i-1,i-l,t,p)\in \mathcal{I}'_{k-1}\}|,\ \ \ \ \ \\[.2cm]
&|\mathcal{B}_{i-l,i}|=|\{(i-l,i-1,t,p):(i-l,i-1,t,p)\in \mathcal{I}'_{k-1}\}|.
\end{align*}
From the two equations above, it follows that
\begin{align}\label{eq17}
\sum^k_{i=1}\sum^i_{l=1}|\mathcal{B}_{i,i-l}|+
\sum^k_{i=1}\sum^i_{l=1}|\mathcal{B}_{i-l,i}|=|\{(i,i,t,p):(i,i,t,p)\in\mathcal{I}'_{k-1}\}|+|\mathcal{I}'_{k-1}|.
\end{align}
Note that the value of $|\{(i,i,t,p):(i,i,t,p)\in\mathcal{I}'_{k-1}\}|$ can be computed directly by replacing $k$ by $k-1$ in \eqref{eq18}. Combine \eqref{eq14}, \eqref{eq15}, \eqref{eq18} and \eqref{eq17}, we have
\begin{align}
|\mathcal{I}'_k|
&=\sum^k_{i=0}|\mathcal{B}_{i,i}|+|\{(i,i,t,p):(i,i,t,p)\in\mathcal{I}'_{k-1}\}|+|\mathcal{I}'_{k-1}| \nonumber\\[0.1cm]
&=\frac{(k+1)(k+2)(k+3)}{6}+{k+3\choose 4}\ \ \ \ \ \nonumber\\
&={k+4\choose 4},\nonumber
\end{align}
as desired.

Now, we claim that $\mathcal{I}_m$ is the same as $\mathcal{I}'_m$. Suppose $\mathcal{I}_m$ is
a proper subset of $\mathcal{I}'_m$ for a contradiction. Then we have
$\text{dim}(T)\leq \text{dim}(\mathcal{A})=|\mathcal{I}_m|<{m+4\choose 4}$ by
Lemma \ref{lem3} and \eqref{eq13}. This contradicts Lemma \ref{lem6} and hence our claim holds.

This completes the proof of Proposition \ref{pro1}.
\end{proof}
\begin{thm}\label{thm1}
The algebras {\rm $\mathcal{A}$}  and $T$ coincide.
\end{thm}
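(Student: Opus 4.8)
The plan is to finish by a dimension count, since the substantive work has already been done. By Lemma \ref{lem3} we have the inclusion $T\subseteq\mathcal{A}$, so it suffices to check $\dim T=\dim\mathcal{A}$. On one side, Lemma \ref{lem6} gives $\dim T={m+4\choose 4}$. On the other side, the matrices $M^{t,p}_{i,j}$ with $(i,j,t,p)\in\mathcal{I}_m$ span $\mathcal{A}$ by definition and are linearly independent (their supports $X^{x_0}_{(i,j,t,p)}$ are pairwise disjoint, being distinct orbits by Proposition \ref{cor1}), hence form a basis, so $\dim\mathcal{A}=|\mathcal{I}_m|$. Proposition \ref{pro1} evaluates this to $|\mathcal{I}_m|={m+4\choose 4}$. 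Thus $\dim T=\dim\mathcal{A}$, and since $T$ is a subspace of $\mathcal{A}$ of full dimension, $T=\mathcal{A}$.

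I would simply present this as a short corollary of Lemmas \ref{lem3} and \ref{lem6} together with Proposition \ref{pro1}; indeed it is exactly the reasoning already appearing at the end of the proof of Proposition \ref{pro1}, where one notes that $\mathcal{I}_m\subsetneq\mathcal{I}_m'$ would force $\dim T\le\dim\mathcal{A}=|\mathcal{I}_m|<{m+4\choose 4}$, contradicting Lemma \ref{lem6}. Once $\mathcal{I}_m=\mathcal{I}_m'$ is known, the chain ${m+4\choose 4}=\dim T\le\dim\mathcal{A}=|\mathcal{I}_m|={m+4\choose 4}$ collapses, forcing equality throughout and hence $T=\mathcal{A}$.

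I do not expect any genuine obstacle at this stage. The only effort is hidden in the two inputs the argument rests on: the combinatorial identity $|\mathcal{I}_m|={m+4\choose 4}$ (established here by induction in Proposition \ref{pro1}) and the value $\dim T={m+4\choose 4}$ (imported from \cite{klw} as Lemma \ref{lem6}). If one wanted Theorem \ref{thm1} to be independent of \cite{klw}, the real work would be to compute $\dim T$ directly, for instance by expressing each $M^{t,p}_{i,j}$ as a polynomial in $A_1,E^*_0,\dots,E^*_m$, or by decomposing $V$ into irreducible $T$-modules and summing the squares of the multiplicities of the isomorphism types (work carried out later, in Theorems \ref{lem5} and \ref{pro5}); either route reproves $\dim T={m+4\choose 4}$ and makes the theorem self-contained. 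Given Lemma \ref{lem6}, though, the short count above is all that is needed.
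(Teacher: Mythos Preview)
Your argument is correct and matches the paper's proof essentially verbatim: the paper also derives $T\subseteq\mathcal{A}$ from Lemma \ref{lem3} and then equates dimensions via Proposition \ref{pro1} and Lemma \ref{lem6}. Your additional commentary on how to make the result independent of \cite{klw} is accurate but goes beyond what the paper provides.
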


\begin{proof}
On the one hand, we have  $T\subseteq \mathcal{A}$
by Lemma \ref{lem3}. On the other hand, we have that $\text{dim}(T)=\text{dim}(\mathcal{A})={m+4\choose 4}$ by Proposition \ref{pro1} and Lemma \ref{lem6}. So the result holds.
\end{proof}

\begin{cor}\label{cor2}
The set $\{M^{t,p}_{i,j}\mid (i,j,t,p)\in \mathcal{I}_m\}$ gives a basis of $T$.
\end{cor}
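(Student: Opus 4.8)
The plan is to deduce Corollary~\ref{cor2} directly from Theorem~\ref{thm1} together with the observations already recorded after Definition~\ref{def1}. First I would recall that, by construction, the matrices $M^{t,p}_{i,j}$ with $(i,j,t,p)\in\mathcal{I}_m$ are precisely the characteristic (0-1) matrices of the orbits $X^{x_0}_{(i,j,t,p)}$ of ${\rm Aut}_{x_0}(O_{m+1})$ on $x_0\times X\times X$ (Proposition~\ref{cor1}), so distinct four-tuples index matrices with disjoint supports. Hence the set $\{M^{t,p}_{i,j}\mid(i,j,t,p)\in\mathcal{I}_m\}$ is linearly independent, a fact already noted in the paragraph following Definition~\ref{def1}.

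Next I would invoke the definition of $\mathcal{A}$ as the span of exactly these matrices, so that they form a basis of $\mathcal{A}$ and $\dim\mathcal{A}=|\mathcal{I}_m|={m+4\choose 4}$ by Proposition~\ref{pro1}. Then, by Theorem~\ref{thm1}, $\mathcal{A}=T$, and a spanning, linearly independent subset of $\mathcal{A}$ is automatically a basis of $T$. This yields the claim immediately.

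There is essentially no obstacle here: the statement is a bookkeeping corollary that packages the two nontrivial inputs (the orbit description giving linear independence, and the dimension count of Proposition~\ref{pro1}/Lemma~\ref{lem6} giving equality $T=\mathcal{A}$). If anything needs a word of care, it is only to point out explicitly that the $M^{t,p}_{i,j}$ lie in $\mathcal{A}$ by definition and in $T$ by Theorem~\ref{thm1}, so that ``basis of $T$'' makes sense; this is one sentence. I would therefore keep the proof to a two-line argument: linear independence plus $\dim T={m+4\choose 4}=|\mathcal{I}_m|$ forces the set to be a basis.
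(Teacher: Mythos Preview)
Your proposal is correct and matches the paper's own argument, which is simply ``Immediate from Theorem~\ref{thm1}.'' You have merely unpacked that one line by recalling the linear independence noted after Definition~\ref{def1} and the equality $T=\mathcal{A}$, so there is nothing to add or change.
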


\begin{proof}
Immediate form Theorem \ref{thm1}.
\end{proof}

\section{The decomposition of $T$}

Recall the standard module $V:=\mathbb{C}^X$.
By a {\it {homogeneous component}} of $V$, we mean  a  nonzero subspace of $V$ spanned by the  irreducible $T$-modules that are isomorphic. In this section, we aim
to give a decomposition of $T$ (in a block-diagonalization form)
by using all the homogeneous components of $V$; this work  is motivated by the following known fact: for a matrix $\ast$-algebra over $\mathbb{C}$ with the identity matrix, all its  elements can be simultaneously block-diagonalized by
some unitary matrix  (see \cite{bek} for more details).

For the rest of this paper, we assume that $m\geq 3$ and that $O_{m+1}$ is $Q$-polynomial with respect to the given ordering
$E_0,E_1,\ldots,E_m$ of its primitive idempotents. The following properties on irreducible $T$-modules are taken from the paper  by J.S. Caughman $et\ al$. \cite{cau}.

\begin{lem}{\rm (\cite{cau})}\label{lem4}
Let $W$ denote an irreducible $T$-module and
let $\nu, \mu, d\ (0\leq \nu, \mu,  d\leq m) $ denote its endpoint, dual endpoint and diameter, respectively. Then the following  {\rm{(i)--(iv)}} hold.
\begin{itemize}
\item[\rm(i)] $W$ is thin and dual thin.
\item[\rm(ii)] $\nu+d=m$.
\item[\rm(iii)] The pair $(\mu, d)$ is restricted to the set below:
\begin{align}\label{eq7}
 \Upsilon:=\{(\mu,d)\in \mathbb{Z}^2|\ 0\leq d\leq m,\ \frac{1}{2}(m-d)\leq\mu\leq m-d \}.
\end{align}
\item[\rm(iv)] The isomorphism class of $W$  depends only on the pair $(\mu,d)$.
\end{itemize}
\end{lem}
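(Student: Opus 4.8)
The plan is to extract all four assertions from the group action, via the identification $T=\mathcal A$ of Theorem~\ref{thm1}. Thus $T={\rm End}_{\mathbb C G}(V)$, where $G:={\rm Aut}_{x_0}(O_{m+1})$ is the setwise stabilizer of $x_0=\{1,\dots,m\}$ in ${\rm Sym}(S)={\rm Aut}(O_{m+1})$, hence $G={\rm Sym}(x_0)\times{\rm Sym}(S\setminus x_0)\cong S_m\times S_{m+1}$. By the double centralizer theorem for the commuting pair $(G,T)$ on $V$, the irreducible $T$-modules are the Schur--Weyl partners $W_{(\lambda,\tau)}:={\rm Hom}_{\mathbb C G}(S^{\lambda}\boxtimes S^{\tau},V)$ of the irreducible $G$-constituents $S^{\lambda}\boxtimes S^{\tau}$ of $V$, and $W_{(\lambda,\tau)}\cong W_{(\lambda',\tau')}$ as $T$-modules iff $(\lambda,\tau)=(\lambda',\tau')$. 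I will use two standard facts: $V=\bigoplus_{l=0}^{m}E_lV$ with $E_lV\cong S^{(2m+1-l,l)}$ as an $S_{2m+1}$-module, the ordering being the $Q$-polynomial one so that $A_1$ has eigenvalue $\theta_l=(-1)^l(m+1-l)$ on $E_lV$ (\cite{bcn}); and $E^*_iV$ is the span of the vertices at distance $i$ from $x_0$. Since each $E_lV$ is a two-row Specht module, restriction to $S_m\times S_{m+1}$ produces only two-row partitions, each with multiplicity at most one, so I write $\lambda=(m-p,p)$ and $\tau=(m+1-q,q)$ and label the irreducible $T$-modules by pairs $(p,q)$ with $0\le p\le\lfloor m/2\rfloor$, $0\le q\le\lfloor(m+1)/2\rfloor$; each such label occurs because $S^{(m-p,p)}\boxtimes S^{(m+1-q,q)}$ already appears in $E_{p+q}V$ (note $p+q\le m$).

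I would prove (i) by a multiplicity count. As $E^*_i\in M^*\subseteq T$ acts on each block $(S^{\lambda}\boxtimes S^{\tau})\otimes W_{(\lambda,\tau)}$ by $1\otimes(E^*_i|_{W})$, we get $\dim(E^*_iW_{(\lambda,\tau)})=[E^*_iV:S^{\lambda}\boxtimes S^{\tau}]_G$. But by \eqref{eq1} the distance-$i$ vertices are exactly the $m$-sets $y$ with $|x_0\cap y|$ equal to a single value $a_i$, so $E^*_iV\cong\mathbb C^{\binom{[m]}{a_i}}\boxtimes\mathbb C^{\binom{[m+1]}{m-a_i}}$ as a $G$-module, a tensor product of multiplicity-free modules (each factor because $(S_n,S_k\times S_{n-k})$ is a Gelfand pair); hence $E^*_iV$ is multiplicity-free and $\dim(E^*_iW)\le1$. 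So $W$ is thin, and therefore dual thin.

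For (ii): by \eqref{eq1} every vertex at distance $m$ has $|x_0\cap y|=\lfloor m/2\rfloor$, so $E^*_mV\cong\mathbb C^{\binom{[m]}{\lfloor m/2\rfloor}}\boxtimes\mathbb C^{\binom{[m+1]}{\lceil m/2\rceil}}$; since $\mathbb C^{\binom{[n]}{k}}=\bigoplus_{j=0}^{\min(k,n-k)}S^{(n-j,j)}$, the first factor contains every $S^{(m-p,p)}$ with $p\le\lfloor m/2\rfloor$ and the second every $S^{(m+1-q,q)}$ with $q\le\lfloor(m+1)/2\rfloor$, i.e. $E^*_mV$ contains every $G$-type occurring in $V$. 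Hence $E^*_mW\ne0$ for every irreducible $T$-module $W$; since $\{i:E^*_iW\ne0\}$ equals the interval $[\nu,\nu+d]$ and lies in $[0,m]$ (Lemma~\ref{lem1}(ii)), this forces $\nu+d=m$.

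For (iii) and (iv): $E_lW\ne0$ iff $c^{(2m+1-l,l)}_{(m-p,p),(m+1-q,q)}\ne0$, and in a Littlewood--Richardson filling of shape $(2m+1-l,l)/(m-p,p)$ the entry $2$ can occur only in the second row (which has $l-p$ cells), so the least such $l$ is $p+q$; thus $\mu=p+q$. A parallel count, using $S^{(m-p,p)}\subseteq\mathbb C^{\binom{[m]}{a}}\iff p\le\min(a,m-a)$ together with the value of $a_i$ from \eqref{eq1}, gives $\nu=2p$ if $q\le p$ and $\nu=2q-1$ if $q>p$; in either case $\tfrac12\nu\le p+q=\mu\le\nu$, which with $\nu=m-d$ is precisely $(\mu,d)\in\Upsilon$, proving (iii). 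For (iv), $\mu=p+q$ and $\nu=m-d$ reconstruct $(p,q)$ --- the parity of $\nu$ selects the case, after which $p$ and $q$ are determined --- so the isomorphism class of $W$ depends only on $(\mu,d)$. The only point of any delicacy is the parity case-split in (iii), where I would be most careful; none of the steps is conceptually hard. I note finally that (i)--(iv) can also be obtained without representation theory, as in \cite{cau} and \cite[Example~6.1]{ter3}: since $O_{m+1}$ is almost-bipartite and $Q$-polynomial, each irreducible $T$-module is a Leonard-pair module whose parameter array is forced by the intersection numbers and the eigenvalues $\theta_l=(-1)^l(m+1-l)$; that route avoids the symmetric-group machinery but does not by itself produce the labelling by $(p,q)$.
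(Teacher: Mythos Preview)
Your argument is correct, but note that the paper does not actually prove Lemma~\ref{lem4}: it is quoted verbatim from \cite{cau}, whose methods are the general structure theory of almost-bipartite $P$- and $Q$-polynomial association schemes (Leonard pairs, tridiagonal systems). Your route is genuinely different. You invoke Theorem~\ref{thm1} to identify $T$ with ${\rm End}_{\mathbb{C}G}(V)$ for $G\cong S_m\times S_{m+1}$, and then read off (i)--(iv) from the branching of the two-row Specht modules $E_lV\cong S^{(2m+1-l,l)}$ and the Gelfand-pair decomposition of each $E^*_iV$. This is very much in the spirit of the present paper's ``group action'' viewpoint, and it delivers more than the statement asks: the explicit bijection $(p,q)\leftrightarrow(\mu,d)$ you obtain (with $\mu=p+q$ and $\nu=2p$ or $2q-1$ according to whether $q\le p$ or $q>p$) already shows that every pair in $\Upsilon$ is realised, so Theorem~\ref{lem5} falls out for free without the dimension-counting argument the paper uses. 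The trade-off is that your proof is specific to $O_{m+1}$ and relies on Theorem~\ref{thm1} (hence on Lemma~\ref{lem6} from \cite{klw}), whereas the cited result from \cite{cau} holds uniformly for all almost-bipartite $P$- and $Q$-polynomial schemes. There is no circularity: Theorem~\ref{thm1} is established in Section~3 independently of Lemma~\ref{lem4}. Your case analysis for $\nu$ in (iii) checks out; the only cosmetic point is that in passing from ``thin'' to ``dual thin'' in (i) you are using the equivalence stated in Section~2, which you might cite explicitly.
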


Let $W$ denote an irreducible $T$-module with dual endpoint $\mu$ and diameter $d$, where $(\mu,d)\in  \Upsilon$. Define $\mathcal{W}_{(\mu,d)}$ to be the subspace of $V$ spanned by the irreducible $T$-modules that are isomorphic to $W$. Clearly, $\mathcal{W}_{(\mu,d)}$ is a homogeneous component of $V$ associated with  $(\mu,d)$. Since any two non-isomorphic irreducible $T$-modules are
orthogonal, we have
\begin{align}\label{eq56}
V=\sum_{(\mu, d)\in \Upsilon}\mathcal{W}_{(\mu,d)}\ \ \ \  \ (\text {orthogonal\ direct\ sum})
\end{align}
Write $V$  as an orthogonal direct sum of irreducible $T$-modules. By the {\it multiplicity} with
which $W$ appears in $V$, we mean the number of irreducible $T$-modules in this sum which are isomorphic to $W$.
In view of Lemma \ref{lem4}(iv),  we use  $m(\mu,d)$ to denote the multiplicity of $W$.
A formula on $m(\mu,d)$  was given by \cite[Theorem 16.6]{cau} and it allows us to recursively compute
$m(\mu,d)$ for every $(\mu, d)\in \Upsilon$.
However, this Theorem 16.6 does not claim that every pair $(\mu, d)$ in
$\Upsilon$ arises from an irreducible $T$-module.

The following result gives all feasible dual endpoints and diameters for irreducible $T$-modules, and tells us  $m(\mu,d)\neq 0$ for every $(\mu, d)\in\Upsilon$.

\begin{thm}\label{lem5}
There exists an irreducible $T$-module with dual endpoint $\mu$ and diameter $d$ if and only if  $(\mu,d)\in  \Upsilon$.
\end{thm}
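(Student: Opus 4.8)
The plan is to derive the theorem from a dimension count, using that $T$ is semisimple and that $\dim T=\binom{m+4}{4}$ (Lemma \ref{lem6}, equivalently Corollary \ref{cor2}). The ``only if'' direction is exactly Lemma \ref{lem4}(iii), so the real content is the ``if'' direction: every $(\mu,d)\in\Upsilon$ is realized by an irreducible $T$-module. First I would invoke the Wedderburn structure: since $T$ is a semisimple $\mathbb{C}$-algebra, $T\cong\bigoplus_{s}\mathrm{Mat}_{n_s}(\mathbb{C})$ with $s$ ranging over the isomorphism classes of irreducible $T$-modules and $n_s$ the common dimension of the modules in class $s$, so that $\dim T=\sum_{s}n_s^{2}$.

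Next I would identify the index set of this decomposition with a subset of $\Upsilon$. Since $V=\mathbb{C}^X$ is a faithful $T$-module (as $T\subseteq\mathrm{Mat}_X(\mathbb{C})$), every isomorphism class is represented by a $T$-submodule of $V$, so Lemma \ref{lem4} applies to it: such a module $W$ is thin, hence $n_s=\dim W=d+1$ where $d$ is its diameter; by Lemma \ref{lem4}(iv) its class depends only on the pair $(\mu,d)$ of dual endpoint and diameter; and conversely $\mu$ and $d$ are isomorphism invariants, being read off from which of the $E_iW$ and $E^{*}_iW$ vanish (and $E_i,E^{*}_i\in T$). Hence the classes $s$ are in bijection with the subset $\Upsilon'\subseteq\Upsilon$ of pairs that actually occur, and
\[
\binom{m+4}{4}=\dim T=\sum_{(\mu,d)\in\Upsilon'}(d+1)^{2}\ \le\ \sum_{(\mu,d)\in\Upsilon}(d+1)^{2}.
\]

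The hard part will be the combinatorial identity
\[
\sum_{(\mu,d)\in\Upsilon}(d+1)^{2}=\sum_{d=0}^{m}(d+1)^{2}\Bigl(\bigl\lfloor\tfrac{m-d}{2}\bigr\rfloor+1\Bigr)=\binom{m+4}{4},
\]
where I used that for fixed $d$ there are exactly $\lfloor(m-d)/2\rfloor+1$ integers $\mu$ in the range $\tfrac12(m-d)\le\mu\le m-d$. I would prove this by induction on $m$, separating the cases of $m$ even and odd to handle the floor and telescoping against $\binom{m+4}{4}-\binom{m+3}{4}=\binom{m+3}{3}$; it can be cross-checked against $|\mathcal{I}_m|=\binom{m+4}{4}$ from Proposition \ref{pro1}. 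Granting the identity, the displayed inequality becomes an equality, and since each summand $(d+1)^{2}$ is strictly positive this forces $\Upsilon'=\Upsilon$; thus every $(\mu,d)\in\Upsilon$ arises from an irreducible $T$-module, as desired. Note that this route sidesteps having to show directly that the recursion of \cite[Theorem 16.6]{cau} always yields positive multiplicities. The only genuine obstacle is the bookkeeping in the identity (the range of $\mu$, the floor, the induction step); all the structural input is already in place via Lemma \ref{lem4} and the known value of $\dim T$.
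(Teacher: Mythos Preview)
Your proposal is correct and is essentially the same argument as the paper's: both directions match, and the ``if'' direction is exactly the dimension count $\dim T=\sum_{(\mu,d)\in\Upsilon'}(d+1)^2\le\sum_{(\mu,d)\in\Upsilon}(d+1)^2=\binom{m+4}{4}$ forcing $\Upsilon'=\Upsilon$. The only cosmetic differences are that the paper phrases the Wedderburn input via block-diagonalization (citing \cite{bek}) rather than stating it abstractly, writes the count of $\mu$'s as $m-d-\lfloor\tfrac{m-d+1}{2}\rfloor+1$ (equal to your $\lfloor\tfrac{m-d}{2}\rfloor+1$), and asserts the combinatorial identity without an explicit induction.
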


\begin{proof}
Lemma \ref{lem4}(iii) implies the lemma in one direction. We next prove the lemma in the other direction. Let $\Psi$ be the subset of  $ \Upsilon$ containing all dual endpoints and diameters that arise from irreducible $T$-modules.  Suppose $\Psi$ is a proper subset of  $ \Upsilon$ for a contradiction.
By Lemma \ref{lem4} and \cite[pp. 96--98]{bek}, we have that there exists a unitary ${2m+1\choose m}\times {2m+1\choose m}$ matrix $U$, whose
columns consist of
appropriate orthonormal bases of all the homogeneous components of $V$ in \eqref{eq56},
such that $\overline{U}^{\rm t}TU$ consists of all block-diagonal matrices:
\begin{equation}\label{eq11}
\bigoplus_{(\mu,d)\in \Psi}m(\mu,d)\odot B_{(\mu,d)},
\end{equation}
where $B_{(\mu,d)}\in \mathbb{C}^{(d+1)\times (d+1)}$ and $m(\mu,d)\odot B_{(\mu,d)}$ denotes the iterated direct sum $\oplus^{m(\mu,d)}_{i=1}B_{(\mu,d)}$.
Then by deleting copies of blocks in \eqref{eq11} and using $\lfloor\frac{m-d+1}{2}\rfloor\leq \mu\leq m-d$ from \eqref{eq7}, we obtain the following inequality
\begin{align*}
{\rm dim}(T)=\sum_{(\mu,d)\in \Psi}(d+1)^2<\sum_{(\mu,d)\in  \Upsilon}(d+1)^2
&=\sum^m_{d=0}(m-d-\lfloor\frac{m-d+1}{2}\rfloor+1)(d+1)^2\\
&={m+4\choose 4}.
\end{align*}
This contradicts the fact that ${\rm dim}(T)={m+4\choose 4}$. So $\Psi= \Upsilon$. Thus the result holds.
\end{proof}

\begin{thm}\label{pro5}
The algebra $T$ is isomorphic to
\begin{equation}
\bigoplus^{m}_{d=0}(m-d-\lfloor\frac{m-d+1}{2}\rfloor+1)\odot\mathbb{C}^{(d+1)\times (d+1)}.\nonumber
\end{equation}
\end{thm}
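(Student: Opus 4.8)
The plan is to deduce Theorem \ref{pro5} directly from the structure theory of finite-dimensional semisimple $\mathbb{C}$-algebras together with the results already established, principally Lemma \ref{lem4} and Theorem \ref{lem5}. Since $T$ is a semisimple $\mathbb{C}$-algebra closed under conjugate-transpose (by Theorem \ref{thm1}, $T=\mathcal{A}$ is a centralizer algebra, hence a $\ast$-algebra), the Wedderburn--Artin theorem gives a canonical isomorphism $T\cong\bigoplus_{i}\mathbb{C}^{n_i\times n_i}$, where the simple blocks are indexed by the isomorphism classes of irreducible $T$-modules and $n_i$ is the dimension of the corresponding irreducible module. So the entire content reduces to: (a) identify the set of isomorphism classes of irreducible $T$-modules, and (b) for each class compute the dimension of a representative.

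First I would invoke Theorem \ref{lem5}: the isomorphism classes of irreducible $T$-modules are in bijection with the set $\Upsilon=\{(\mu,d)\in\mathbb{Z}^2\mid 0\le d\le m,\ \tfrac12(m-d)\le\mu\le m-d\}$, via Lemma \ref{lem4}(iv) (the class depends only on $(\mu,d)$) and Theorem \ref{lem5} (every such pair is realized). Next, for a fixed $(\mu,d)\in\Upsilon$, an irreducible $T$-module $W$ with that dual endpoint and diameter is thin by Lemma \ref{lem4}(i), so by the remark after the definition of thin (``$W$ is thin iff $\dim(W)=\mathrm{diameter}(W)+1$'') we get $\dim(W)=d+1$. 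Hence the simple block attached to $(\mu,d)$ is $\mathbb{C}^{(d+1)\times(d+1)}$, and $T\cong\bigoplus_{(\mu,d)\in\Upsilon}\mathbb{C}^{(d+1)\times(d+1)}$.

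Finally I would reorganize this direct sum by grouping the blocks according to the value of $d$. For each fixed $d$ with $0\le d\le m$, the number of admissible $\mu$ is the number of integers in the interval $[\tfrac12(m-d),\,m-d]$, which equals $(m-d)-\lceil\tfrac12(m-d)\rceil+1 = m-d-\lfloor\tfrac{m-d+1}{2}\rfloor+1$ (using $\lceil a/2\rceil=\lfloor (a+1)/2\rfloor$ for integers $a$, applied to $a=m-d$, consistently with the bound written in the proof of Theorem \ref{lem5}). Therefore each $(d+1)\times(d+1)$ block occurs with multiplicity $m-d-\lfloor\tfrac{m-d+1}{2}\rfloor+1$, yielding exactly
\begin{equation}\nonumber
T\cong\bigoplus^{m}_{d=0}\Bigl(m-d-\Bigl\lfloor\tfrac{m-d+1}{2}\Bigr\rfloor+1\Bigr)\odot\mathbb{C}^{(d+1)\times(d+1)},
\end{equation}
which is the claim. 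As a consistency check (not strictly needed for the proof) one may sum the squared block sizes to recover $\dim(T)=\sum_{d=0}^m(m-d-\lfloor\tfrac{m-d+1}{2}\rfloor+1)(d+1)^2=\binom{m+4}{4}$, in agreement with Lemma \ref{lem6}; this is the same computation already carried out in the proof of Theorem \ref{lem5}. I do not anticipate a genuine obstacle here: the only mild subtlety is bookkeeping the count of $\mu$'s and matching the two forms $\lceil(m-d)/2\rceil$ and $\lfloor(m-d+1)/2\rfloor$, and making explicit that $T$ being a $\ast$-subalgebra of $\mathrm{Mat}_X(\mathbb{C})$ legitimizes the Wedderburn decomposition with the stated interpretation of blocks as endomorphism algebras of the irreducible modules.
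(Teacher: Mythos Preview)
Your proposal is correct and follows essentially the same approach as the paper: the paper's proof is the single line ``Immediate from Theorem \ref{lem5} and its proof,'' and the block-diagonalization established there (via \cite{bek}) is precisely the Wedderburn decomposition you invoke, with the enumeration of isomorphism classes by $\Upsilon$ (Lemma \ref{lem4}(iv) and Theorem \ref{lem5}) and the block sizes $d+1$ coming from thinness (Lemma \ref{lem4}(i)). Your write-up simply makes explicit what the paper leaves implicit.
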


\begin{proof}
Immediate from Theorem \ref{lem5} and its proof.
\end{proof}

\begin{lem}\label{lem21}
The center of $T$ has dimension $\lfloor\frac{(m+2)^2}{4}\rfloor$.
\end{lem}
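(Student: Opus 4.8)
The plan is to read off the dimension of the center directly from the Wedderburn-type decomposition of $T$ already established in Theorem \ref{pro5}. Since $T$ is a semisimple $\mathbb{C}$-algebra, it is isomorphic to a direct sum of full matrix algebras, with one matrix block for each isomorphism class of irreducible $T$-module; and the center of a full matrix algebra $\mathbb{C}^{n\times n}$ is the one-dimensional space of scalar matrices. Hence $\dim Z(T)$ equals the total number of blocks in the decomposition, i.e. the number of isomorphism classes of irreducible $T$-modules. By Lemma \ref{lem4}(iv) together with Theorem \ref{lem5}, these isomorphism classes are in bijection with the set $\Upsilon$ of \eqref{eq7}; equivalently, grouping the blocks of Theorem \ref{pro5} by the value of $d$, the number of blocks is $\sum_{d=0}^{m}\big(m-d-\lfloor\frac{m-d+1}{2}\rfloor+1\big)$.

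It then remains to evaluate this sum and verify that it equals $\lfloor\frac{(m+2)^2}{4}\rfloor$. Substituting $k=m-d$ and using the elementary identity $k-\lfloor\frac{k+1}{2}\rfloor=\lfloor\frac{k}{2}\rfloor$, the sum becomes $\sum_{k=0}^{m}\big(\lfloor\frac{k}{2}\rfloor+1\big)=(m+1)+\sum_{k=0}^{m}\lfloor\frac{k}{2}\rfloor$. Splitting into the cases $m=2r$ and $m=2r+1$ gives $\sum_{k=0}^{m}\lfloor\frac{k}{2}\rfloor$ equal to $r^2$ and $r^2+r$ respectively, so the total is $(r+1)^2=\frac{(m+2)^2}{4}$ when $m$ is even and $(r+1)(r+2)=\frac{(m+2)^2-1}{4}$ when $m$ is odd; in both cases this is exactly $\lfloor\frac{(m+2)^2}{4}\rfloor$, as wanted.

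The argument is essentially immediate once Theorem \ref{pro5} is available, so there is no real obstacle; the only point needing a little care is the claim that each pair $(\mu,d)\in\Upsilon$ contributes precisely one matrix block (rather than several copies being merged, or none occurring), which is exactly the combination of Lemma \ref{lem4}(iv) — the isomorphism class of an irreducible $T$-module is determined by $(\mu,d)$ — and Theorem \ref{lem5} — every $(\mu,d)\in\Upsilon$ actually arises. A self-contained alternative, bypassing Theorem \ref{pro5}, would be to write $V$ as an orthogonal direct sum of irreducible $T$-modules, observe that $Z(T)$ has as a basis the central primitive idempotents, one for each homogeneous component $\mathcal{W}_{(\mu,d)}$ in \eqref{eq56}, and then count the homogeneous components, which is again $|\Upsilon|=\lfloor\frac{(m+2)^2}{4}\rfloor$.
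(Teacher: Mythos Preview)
Your proposal is correct and takes essentially the same approach as the paper: both identify $\dim Z(T)$ with the number of isomorphism classes of irreducible $T$-modules (via semisimplicity), use Lemma~\ref{lem4}(iv) and Theorem~\ref{lem5} to equate this with $|\Upsilon|$, and then compute $|\Upsilon|$ by parity of $m$. The paper's proof is more terse (it skips the Wedderburn detour through Theorem~\ref{pro5} and goes straight to counting isomorphism classes, which is exactly your ``self-contained alternative''), but the substance is identical.
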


\begin{proof}
The dimension of the center of $T$ is exactly the number of isomorphism classes for irreducible $T$-modules. By Lemma \ref{lem4} and Theorem \ref{lem5}, this number is $|\Upsilon|$ that equals $\frac{(m+1)(m+3)}{4}$ if $m$ is odd, and $\frac{(m+2)^2}{4}$ if $m$ is even.
\end{proof}

\section{The homogeneous components of $V$}

For any fixed pair $(\mu,d)\in \Upsilon$, recall $\mathcal{W}_{(\mu,d)}$ is a homogeneous component of $V$ associated with $(\mu,d)$.
In this section, we display an orthogonal basis of $\mathcal{W}_{(\mu,d)}$. We begin with the following lemma which is based on Proposition \ref{lem2}(i), (ii).

\begin{lem}\label{lem7}
 For  nonnegative integers $i,k\ (0\leq i,k \leq m)$ satisfying $i+k\leq m$, the following {\rm (i)--(iv)} hold.
\begin{itemize}
\item[\rm (i)] If $i$ is even and $k$ is odd, then
\begin{align}\label{eq21}
E^*_{i+k}A_1E^*_{i+k-1}A_1E^*_{i+k-2}\cdots E^*_{i+1}A_1E^*_{i}
=\big((\frac{k-1}{2})!\big)^2\frac{k+1}{2} M^{\frac{k-1}{2},\frac{k-1}{2}}_{\frac{i+k-1}{2},\frac{2m-i}{2}}.
\end{align}
\item[\rm (ii)] If $i$ is even and $k\ (k\geq 2)$ is even, then
\begin{align}\label{eq16}
E^*_{i+k}A_1E^*_{i+k-1}A_1E^*_{i+k-2}\cdots E^*_{i+1}A_1E^*_{i}
=\big((\frac{k}{2})!\big)^2 M^{\frac{2m-k}{2},\frac{2m-i-k}{2}}_{\frac{2m-i-k}{2},\frac{2m-i}{2}}.
\end{align}
\item[\rm (iii)] If $i$ is odd and $k$ is odd, then
\begin{align*}
E^*_{i+k}A_1E^*_{i+k-1}A_1E^*_{i+k-2}\cdots E^*_{i+1}A_1E^*_{i}
=\big((\frac{k-1}{2})!\big)^2\frac{k+1}{2} M^{\frac{k-1}{2},0}_{\frac{2m-i-k}{2},\frac{i-1}{2}}.
\end{align*}
\item[\rm (iv)] If $i$ is odd and $k\ (k\geq 2)$ is even, then
\begin{align*}
E^*_{i+k}A_1E^*_{i+k-1}A_1E^*_{i+k-2}\cdots E^*_{i+1}A_1E^*_{i}
=\big((\frac{k}{2})!\big)^2 M^{\frac{2m-k}{2},\frac{i-1}{2}}_{\frac{i+k-1}{2},\frac{i-1}{2}}.
\end{align*}
\end{itemize}
\end{lem}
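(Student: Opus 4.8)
The plan is to compute the $(x,y)$-entry of the product $E^*_{i+k}A_1E^*_{i+k-1}A_1E^*_{i+k-2}\cdots E^*_{i+1}A_1E^*_i$ directly and to recognize the answer as a scalar multiple of a single basis matrix $M^{t,p}_{i',j'}$ from Corollary \ref{cor2}. Unwinding the definitions of $A_1$ and of the dual idempotents, this entry equals the number of walks $y=w_0\sim w_1\sim\cdots\sim w_k=x$ in $O_{m+1}$ with $\partial(x_0,w_\ell)=i+\ell$ for all $0\le\ell\le k$, and it vanishes unless $\partial(x_0,y)=i$ and $\partial(x_0,x)=i+k$; the hypothesis $i+k\le m=D$ guarantees the dual idempotents involved are nonzero. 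First I would use \eqref{eq1} (together with $i+k\le m$) to rewrite each constraint $\partial(x_0,w_\ell)=i+\ell$ as $|x_0\cap w_\ell|=a_\ell$, where $a_\ell=(i+\ell-1)/2$ when $i+\ell$ is odd and $a_\ell=(2m-i-\ell)/2$ when $i+\ell$ is even; the ``one step'' content of this is exactly Proposition \ref{lem2}(i),(ii), on which the present lemma is built.

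The key structural point I would exploit is the rigidity of such walks. For each $\ell$ write $w_\ell=P_\ell\sqcup Q_\ell$ with $P_\ell:=x_0\cap w_\ell\subseteq x_0$ and $Q_\ell:=w_\ell\setminus x_0\subseteq S\setminus x_0$; since $|x_0|=m$ and $|S\setminus x_0|=m+1$, we have $|P_\ell|=a_\ell$ and $|Q_\ell|=m-a_\ell$. The adjacency $w_\ell\cap w_{\ell+1}=\emptyset$ splits into the two independent conditions $P_\ell\cap P_{\ell+1}=\emptyset$ and $Q_\ell\cap Q_{\ell+1}=\emptyset$. A short parity computation shows that $a_\ell+a_{\ell+1}$ alternates between $m$ and $m-1$; hence at each step either $|P_\ell|+|P_{\ell+1}|=m$, which forces $P_{\ell+1}=x_0\setminus P_\ell$ while $Q_{\ell+1}$ is obtained from $(S\setminus x_0)\setminus Q_\ell$ by deleting one element lying outside $Q_\ell$, or $|Q_\ell|+|Q_{\ell+1}|=m+1$, which forces $Q_{\ell+1}=(S\setminus x_0)\setminus Q_\ell$ while $P_{\ell+1}$ is obtained from $x_0\setminus P_\ell$ by deleting one element lying outside $P_\ell$.

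Consequently, starting from the fixed vertex $y$, the walk is completely encoded by two ordered tuples: the elements successively deleted on the $x_0$-side and those successively deleted on the $(S\setminus x_0)$-side, the only restriction being that each tuple consist of distinct elements; the lengths of the two tuples equal the numbers of steps of the two types, which are read off from the parities of $i$ and $k$. I would then telescope the resulting recursions for $P_\ell$ and $Q_\ell$. This should give two things: (a) if $w_k=x$ then $\partial(x_0,x,y)$ must equal exactly the four-tuple indexing the matrix $M^{t,p}_{i',j'}$ on the right-hand side of the asserted identity, so the $(x,y)$-entry vanishes for every other $(x,y)$; and (b) for an $x$ realizing that four-tuple, each of the two sets of deleted elements is uniquely determined by $x$ and $y$ (it is one of $x_0\cap x\cap y$, $(x\cap y)\setminus x_0$, $x_0\setminus(x\cup y)$, $(S\setminus x_0)\setminus(x\cup y)$ according to the case), so only the \emph{orders} of these sets are free. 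Hence the $(x,y)$-entry equals the product of the factorials of the two set-sizes, which is $(\frac{k-1}{2})!\,(\frac{k+1}{2})!=\big((\frac{k-1}{2})!\big)^2\frac{k+1}{2}$ in the odd-$k$ cases (i), (iii) and $(\frac{k}{2})!\,(\frac{k}{2})!=\big((\frac{k}{2})!\big)^2$ in the even-$k$ cases (ii), (iv); this is precisely the claimed coefficient. The same argument handles all four items, the only difference being which parity of $\ell$ produces a deletion on the $x_0$-side and which on the complement side.

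The main obstacle will be the telescoping in the last step: keeping careful track of the sets $P_\ell$ and $Q_\ell$ as alternating ``complement, then delete one'' operations, so as to verify that \emph{all four} coordinates of $\partial(x_0,x,y)$ — not merely $|x_0\cap x|$ — come out exactly as predicted, and that conversely every admissible choice of the two deletion tuples lands on such an $x$. An alternative I could use is induction on $k$: the case $k=1$ is Proposition \ref{lem2}(ii) (or its transpose (iii)); for the inductive step one factors off $E^*_{i+k}A_1E^*_{i+k-1}$ and reduces to computing a product of two basis matrices $M^{0,0}_{\ast,\ast}M^{t,p}_{\ast,\ast}$ by the same counting of intermediate vertices, together with the observation that the a priori possible output four-tuples are all ruled out except one — some because a region of $S$ would have negative size, the rest because a binomial coefficient vanishes. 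The bookkeeping is essentially the same in both approaches.
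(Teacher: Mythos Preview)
Your proposal is correct, and your primary approach is genuinely different from the paper's. The paper proceeds by induction on $k$ in steps of $2$: it assumes the formula for $k-2$, writes $E^*_{i+k}A_1E^*_{i+k-1}$ and $E^*_{i+k-1}A_1E^*_{i+k-2}$ as explicit matrices $M^{0,0}_{\ast,\ast}$ via Proposition~\ref{lem2}(ii), and then computes each product $M^{0,0}_{\ast,\ast}\,M^{t,p}_{\ast,\ast}$ by a direct count of the intermediate vertex $z$, obtaining a single scalar times a single basis matrix at each stage. (Part (ii) is then derived from (i) by one further such multiplication, and (iii),(iv) are handled analogously.) This is precisely the ``alternative'' you sketch at the end of your proposal.

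Your main argument instead counts the constrained walks $y=w_0\sim\cdots\sim w_k=x$ in one shot, via the decomposition $w_\ell=P_\ell\sqcup Q_\ell$ and the observation that at each step exactly one of the two parts is the forced complement while the other loses a single fresh element. This is more conceptual: it explains \emph{why} only one four-tuple $(i',j',t,p)$ survives and \emph{why} the coefficient is a product of two factorials, whereas the paper's induction verifies these facts step by step but does not really illuminate them. The paper's approach, on the other hand, avoids the telescoping bookkeeping you flag as the main obstacle (tracking $P_\ell,Q_\ell$ through repeated ``complement then delete'' moves to pin down all four coordinates of $\partial(x_0,x,y)$), trading it for several short entry computations of the form ``count $z\in X$ with prescribed intersections''. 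Both routes are of comparable length once written out; yours gives a cleaner structural picture, the paper's keeps each individual step elementary.
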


\begin{proof}
(i) For $i$ being even and $k$ being odd, we use induction on $k\ (k\geq 1)$ to show \eqref{eq21}. By \eqref{eq6},
 it is easy to see \eqref{eq21} holds for $k=1$. We now assume that
\eqref{eq21} holds for $k-2\ (k\geq 3)$, that is,
\begin{align}\label{eq22}
E^*_{i+k-2}A_1E^*_{i+k-3}A_1E^*_{i+k-4}\cdots E^*_{i+1}A_1E^*_{i}
=\big((\frac{k-3}{2})!\big)^2\frac{k-1}{2} M^{\frac{k-3}{2},\frac{k-3}{2}}_{\frac{i+k-3}{2},\frac{2m-i}{2}}.
\end{align}
 Observe that $E^*_{i+k-1}A_1E^*_{i+k-2}=M^{0,0}_{\frac{2m-i-k+1}{2},\frac{i+k-3}{2}}$
 and $E^*_{i+k}A_1E^*_{i+k-1}=M^{0,0}_{\frac{i+k-1}{2},\frac{2m-i-k+1}{2}}$ by \eqref{eq6}.
 Moreover, one can readily  verify that
\begin{align}\label{eq23}
M^{0,0}_{\frac{2m-i-k+1}{2},\frac{i+k-3}{2}}M^{\frac{k-3}{2},\frac{k-3}{2}}_{\frac{i+k-3}{2},\frac{2m-i}{2}}
&=\frac{k-1}{2} M^{\frac{2m-k+1}{2},\frac{2m-i-k+1}{2}}_{\frac{2m-i-k+1}{2},\frac{2m-i}{2}}
\end{align}
since the $(x,y)$-entry of  left-side of \eqref{eq23}, with $|x\cap x_0|=\frac{2m-i-k+1}{2}$
and $|y\cap x_0|=\frac{2m-i}{2}$, is equal to $|\{z\in X:|z\cap x_0|=\frac{i+k-3}{2},z\cap x=\emptyset,|z\cap y|=|z\cap y\cap x_0|=\frac{k-3}{2}\}|=\frac{k-1}{2}$, and that
\begin{align}\label{eq24}
M^{0,0}_{\frac{i+k-1}{2},\frac{2m-i-k+1}{2}} M^{\frac{2m-k+1}{2},\frac{2m-i-k+1}{2}}_{\frac{2m-i-k+1}{2},\frac{2m-i}{2}}
=\frac{k+1}{2} M^{\frac{k-1}{2},\frac{k-1}{2}}_{\frac{i+k-1}{2},\frac{2m-i}{2}}
\end{align}
since the $(x,y)$-entry of  left-side of \eqref{eq24}, with $|x\cap x_0|=\frac{i+k-1}{2}$
and $|y\cap x_0|=\frac{2m-i}{2}$, is equal to $|\{z\in X:|z\cap x_0|=\frac{2m-i-k+1}{2},z\cap x=\emptyset,|z\cap y|=\frac{2m-k+1}{2},|z\cap y\cap x_0|=\frac{2m-i-k+1}{2}\}|=\frac{k+1}{2}$.
Then combine equations
\eqref{eq22}--\eqref{eq24} to obtain the equation \eqref{eq21}, as desired.

(ii) For $i$ being even and $k\ (k\geq 2)$ being even, it follows from (i) that
\begin{align}\label{eq25}
E^*_{i+k-1}A_1E^*_{i+k-2}A_1E^*_{i+k-3}\cdots E^*_{i+1}A_1E^*_{i}
=\big((\frac{k-2}{2})!\big)^2\frac{k}{2} M^{\frac{k-2}{2},\frac{k-2}{2}}_{\frac{i+k-2}{2},\frac{2m-i}{2}}.
\end{align}
Observe that $E^*_{i+k}A_1E^*_{i+k-1}=M^{0,0}_{\frac{2m-i-k}{2},\frac{i+k-2}{2}}$ by \eqref{eq6}.  Moreover, we have
\begin{align}\label{eq26}
M^{0,0}_{\frac{2m-i-k}{2},\frac{i+k-2}{2}}M^{\frac{k-2}{2},\frac{k-2}{2}}_{\frac{i+k-2}{2},\frac{2m-i}{2}}
=\frac{k}{2} M^{\frac{2m-k}{2},\frac{2m-i-k}{2}}_{\frac{2m-i-k}{2},\frac{2m-i}{2}}
\end{align}
since the $(x,y)$-entry of  left-side of \eqref{eq26}, with $|x\cap x_0|=\frac{2m-i-k}{2}$
and $|y\cap x_0|=\frac{2m-i}{2}$, is equal to $|\{z\in X:|z\cap x_0|=\frac{i+k-2}{2},z\cap x=\emptyset,|z\cap y|=|z\cap y\cap x_0|=\frac{k-2}{2}\}|=\frac{k}{2}$.
Then combine equations
\eqref{eq25}, \eqref{eq26} to obtain the equation \eqref{eq16}.

(iii) Similar to the proof of (i): combine the following three equations
\begin{align*}
&E^*_{i+k-2}A_1E^*_{i+k-3}A_1E^*_{i+k-4}\cdots E^*_{i+1}A_1E^*_{i}
=\big((\frac{k-3}{2})!\big)^2\frac{k-1}{2} M^{\frac{k-3}{2},0}_{\frac{2m-i-k+2}{2},\frac{i-1}{2}}\\
&M^{0,0}_{\frac{i+k-2}{2},\frac{2m-i-k+2}{2}}M^{\frac{k-3}{2},0}_{\frac{2m-i-k+2}{2},\frac{i-1}{2}}
=\frac{k-1}{2} M^{\frac{2m-k+1}{2},\frac{i-1}{2}}_{\frac{i+k-2}{2},\frac{i-1}{2}}\\
&M^{0,0}_{\frac{2m-i-k}{2},\frac{i+k-2}{2}}M^{\frac{2m-k+1}{2},\frac{i-1}{2}}_{\frac{i+k-2}{2},\frac{i-1}{2}}
=\frac{k+1}{2} M^{\frac{k-1}{2},0}_{\frac{2m-i-k}{2},\frac{i-1}{2}}
\end{align*}
to obtain the result.

(iv) Similar to the proof of (ii): combine the following two equations
\begin{align*}
&E^*_{i+k-1}A_1E^*_{i+k-2}A_1E^*_{i+k-3}\cdots E^*_{i+1}A_1E^*_{i}
=\big((\frac{k-2}{2})!\big)^2\frac{k}{2} M^{\frac{k-2}{2},0}_{\frac{2m-i-k+1}{2},\frac{i-1}{2}}\\
&M^{0,0}_{\frac{i+k-1}{2},\frac{2m-i-k+1}{2}}M^{\frac{k-2}{2},0}_{\frac{2m-i-k+1}{2},\frac{i-1}{2}}
=\frac{k}{2} M^{\frac{2m-k}{2},\frac{i-1}{2}}_{\frac{i+k-1}{2},\frac{i-1}{2}}
\end{align*}
to obtain the result.
\end{proof}

Let $W$ denote an irreducible $T$-module with endpoint $\nu$ and diameter $d \ (0\leq \nu,d\leq m)$.
If $\nu$ is even, then  we define the vector $\xi_k\in V\ (0\leq k\leq d)$ by
\begin{align}\label{eq27}
\xi_k=\left\{\begin{array}{ll} M^{\frac{k-1}{2},\frac{k-1}{2}}_{\frac{\nu+k-1}{2},\frac{2m-\nu}{2}}\xi_0 &\text{if $k$ is odd,}\\[.2cm] M^{\frac{2m-k}{2},\frac{2m-\nu-k}{2}}_{\frac{2m-\nu-k}{2},\frac{2m-\nu}{2}}\xi_0&\text{if $k$ is even},
\end{array}\right.
\end{align}
where $\xi_0$ is  any nonzero vector in $E^*_{\nu}W$. Note that $M^{m,\frac{2m-\nu}{2}}_{\frac{2m-\nu}{2},\frac{2m-\nu}{2}}\xi_0=E^*_{\nu}\xi_0=\xi_0$ by \eqref{eq5}.

If $\nu$ is odd, then  we define the vector $\eta_k\in V\ (0\leq k\leq d)$ by
\begin{align}\label{eq28}
\eta_k=\left\{\begin{array}{ll} M^{\frac{k-1}{2},0}_{\frac{2m-\nu-k}{2},\frac{\nu-1}{2}}\eta_0 &\text{if $k$ is odd,}\\[.2cm]
M^{\frac{2m-k}{2},\frac{\nu-1}{2}}_{\frac{\nu+k-1}{2},\frac{\nu-1}{2}}\eta_0&\text{if $k$ is even},
\end{array}\right.
\end{align}
where $\eta_0$ is any nonzero vector in $E^*_{\nu}W$. Note that $M^{m,\frac{\nu-1}{2}}_{\frac{\nu-1}{2},\frac{\nu-1}{2}}\eta_0=E^*_{\nu}\eta_0=\eta_0$ by \eqref{eq5}.

\begin{lem}\label{lem12}
Let $W$ denote an irreducible $T$-module with endpoint $\nu$ and diameter $d\ (0\leq \nu,d\leq m)$. Then the following  {\rm{(i)}, \rm{(ii)}} hold.
\begin{itemize}
\item[\rm(i)] If $\nu$ is even, then the  vectors
 $\xi_0,\xi_1,\ldots,\xi_{d}$ of form \eqref{eq27} give an orthogonal basis of $W$.
\item[\rm(ii)] If $\nu$ is odd,
then the  vectors $\eta_0,\eta_1,\ldots,\eta_{d}$ of form \eqref{eq28} give an orthogonal basis of $W$.
\end{itemize}
\end{lem}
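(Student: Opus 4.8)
\textbf{Proof proposal for Lemma \ref{lem12}.}

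The plan is to treat case (i) in detail (with $\nu$ even); case (ii) is entirely parallel after swapping the roles of the two formulas in Proposition \ref{lem2}(i) as recorded in \eqref{eq5} for odd indices, so I would only indicate the changes. Since $W$ is thin with endpoint $\nu$ and diameter $d$, Lemma \ref{lem1}(ii) gives $E^*_iW\neq 0$ exactly for $\nu\le i\le\nu+d$, and thinness means $\dim E^*_iW=1$ for each such $i$; hence $W=\bigoplus_{k=0}^{d}E^*_{\nu+k}W$ and $\dim W=d+1$. So it suffices to show that $\xi_k$ is a nonzero vector lying in $E^*_{\nu+k}W$ for each $k$ with $0\le k\le d$; the $d+1$ vectors then automatically form a basis, and orthogonality is immediate because distinct dual idempotents have orthogonal images (the $E^*_i$ are Hermitian and mutually orthogonal, so $E^*_iV\perp E^*_jV$ for $i\ne j$).

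The key computational step is to identify, for the starting vector $\xi_0\in E^*_\nu W$, the image $M\xi_0$ for the relevant matrix $M$ appearing in \eqref{eq27}. I would do this by relating $\xi_k$ to the iterated product $E^*_{\nu+k}A_1E^*_{\nu+k-1}\cdots A_1E^*_\nu$ applied to $\xi_0$: by Lemma \ref{lem7}(i) (when $k$ is odd) or Lemma \ref{lem7}(ii) (when $k$ is even), with the substitution $i=\nu$, this product is precisely a nonzero scalar multiple of the matrix $M^{\frac{k-1}{2},\frac{k-1}{2}}_{\frac{\nu+k-1}{2},\frac{2m-\nu}{2}}$ (resp.\ $M^{\frac{2m-k}{2},\frac{2m-\nu-k}{2}}_{\frac{2m-\nu-k}{2},\frac{2m-\nu}{2}}$), noting that $\nu+k\le m$ is exactly the hypothesis $i+k\le m$ of Lemma \ref{lem7}. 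Therefore $\xi_k$ equals a nonzero scalar times $\big(E^*_{\nu+k}A_1\cdots A_1E^*_\nu\big)\xi_0$; in particular $\xi_k\in E^*_{\nu+k}V$, and since each $A_1$ and each $E^*_i$ lies in $T$ and $\xi_0\in W$, we get $\xi_k\in W$, hence $\xi_k\in E^*_{\nu+k}W$. It remains to check $\xi_k\ne 0$, equivalently that the iterated product does not kill $\xi_0$. This follows by an easy induction on $k$ using Lemma \ref{lem1}(iii): for $\nu\le i<\nu+d$ the map $E^*_{i+1}A_1E^*_i$ is nonzero on $E^*_iW$, and since $W$ is thin each $E^*_iW$ is one-dimensional, so a nonzero linear map out of it sending the generator to $E^*_{i+1}W$ must send the nonzero $\xi_0$-image to a nonzero vector; composing these, $E^*_{\nu+k}A_1\cdots A_1E^*_\nu$ is nonzero on the line $E^*_\nu W$, so $\xi_k\neq 0$.

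For part (ii), the only change is bookkeeping: with $\nu$ odd one uses $E^*_\nu=M^{m,\frac{\nu-1}{2}}_{\frac{\nu-1}{2},\frac{\nu-1}{2}}$ from \eqref{eq5} so that $E^*_\nu\eta_0=\eta_0$, and applies Lemma \ref{lem7}(iii),(iv) with $i=\nu$ in place of parts (i),(ii); the matrices produced are exactly those in \eqref{eq28}, up to nonzero scalars, and the same nonvanishing and orthogonality arguments go through verbatim. The main obstacle, and the only place real work is needed, is making sure the indices in \eqref{eq27}--\eqref{eq28} line up with the output of Lemma \ref{lem7} — i.e.\ verifying that $\big((\tfrac{k-1}{2})!\big)^2\tfrac{k+1}{2}\ne 0$ and that the four-tuple subscripts/superscripts match after setting $i=\nu$ — which is a routine but slightly fiddly check of the parity cases $k$ odd versus $k$ even against the parity of $\nu$; everything else is a direct appeal to thinness (Lemma \ref{lem4}(i) and Lemma \ref{lem1}(ii),(iii)) and to the already-established structure of $T$.
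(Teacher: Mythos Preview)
Your proposal is correct and follows essentially the same approach as the paper: identify $\xi_k$ (resp.\ $\eta_k$) with a nonzero scalar multiple of the iterated product $E^*_{\nu+k}A_1E^*_{\nu+k-1}\cdots A_1E^*_\nu\xi_0$ via Lemma~\ref{lem7}, and then conclude that these iterated products form an orthogonal basis of $W$. The only difference is cosmetic: the paper cites \cite[Lemma~2.2]{hhgy} for the fact that the vectors $E^*_{\nu+k}A_1\cdots A_1E^*_\nu\xi_0$ ($0\le k\le d$) are nonzero and form an orthogonal basis of $W$, whereas you derive this directly from thinness (Lemma~\ref{lem4}(i)) together with Lemma~\ref{lem1}(ii),(iii), which is exactly what that cited lemma does.
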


\begin{proof}
(i) Pick any nonzero vector $\xi_0\in E^*_{\nu}W$. It follows from \cite[Lemma 2.2]{hhgy} that the $d+1$ nonzero vectors $\xi_0, \ E^*_{\nu+1}A_1E^*_{\nu}\xi_0$, $\ldots$,
$E^*_{\nu+d}A_1E^*_{\nu+d-1}\cdots E^*_{\nu+1}A_1E^*_{\nu}\xi_0$ form an orthogonal basis of $W$.
Then from Lemma \ref{lem7}(i),(ii), our result follows.

(ii) Similar to the proof of (i).
\end{proof}

For each $\nu\ (0\leq \nu\leq m)$, let $\textsf{W}_\nu$ denote the $T$-module spanned by the irreducible $T$-modules having  the same endpoint $\nu$. By Lemma \ref{lem4} and Theorem \ref{lem5}, these irreducible $T$-modules also have the same diameter $d=m-\nu$, and have different dual endpoints $\mu$ in the range $\lfloor \frac{\nu+1}{2}\rfloor\leq \mu\leq \nu$. Therefore, we have
\begin{align}\label{eq32}
\textsf{W}_\nu=\mathcal{W}_{(\lfloor \frac{\nu+1}{2}\rfloor,d)}+\mathcal{W}_{(\lfloor \frac{\nu+1}{2}\rfloor+1,d)}+\cdots+\mathcal{W}_{(\nu,d)}\ \ (\text {orthogonal\ direct\ sum})
\end{align}
and
\begin{align}\label{eq31}
V=\textsf{W}_0+\textsf{W}_1+\cdots+\textsf{W}_m\ \ (\text {orthogonal\ direct\ sum}).
\end{align}

For each $\nu\ (0\leq \nu\leq m)$, we  define a corresponding subspace of $V$ as follows:
\begin{align}
\mathcal{L}_{\nu}=\{\xi\in V\mid E^*_{\nu-1}A_1E^*_{\nu}\xi=0,\ \xi_{x}=0 \ \text{if}\ \partial(x,x_0)\neq \nu\}.
\end{align}
Note that by Proposition \ref{lem2}(iii), $E^*_{\nu-1}A_1E^*_{\nu}$ is $M^{0,0}_{\frac{2m-\nu+1}{2},\frac{\nu-1}{2}}$ if $\nu$ is odd,
and  $M^{0,0}_{\frac{\nu-2}{2},\frac{2m-\nu}{2}}$ if $\nu$ is even.
\begin{lem}\label{lem11}
With notation as above, the following  {\rm{(i),\ (ii)}} hold.

\begin{itemize}
\item[\rm(i)] For each $\nu\ (0\leq \nu\leq m)$,\
$\mathcal{L}_\nu=E^*_\nu${\rm{\textsf{W}}}$_\nu$.
\item[\rm(ii)]
For each $\nu\ (0\leq \nu\leq m)$ and each $\mu\ (\lfloor \frac{\nu+1}{2}\rfloor\leq \mu\leq \nu)$,
\begin{align}\label{eq51}
E^*_\nu\mathcal{W}_{(\mu,d)}=E^*_\nu E_{\mu}P(E_{\mu-1})\mathcal{L}_\nu,
\end{align}
where $d=m-\nu$,
$P(E_{\mu-1})=I$ for  $\mu=\lfloor \frac{\nu+1}{2}\rfloor$ and $P(E_{\mu-1})=(I-E^*_\nu E_{\lfloor \frac{\nu+1}{2}\rfloor})(I-E^*_\nu E_{\lfloor \frac{\nu+1}{2}\rfloor+1})\cdots(I-E^*_\nu E_{\mu-1})$ for  $\lfloor \frac{\nu+1}{2}\rfloor+1\leq \mu\leq \nu$.
\end{itemize}
\end{lem}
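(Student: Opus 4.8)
The plan is to prove (i) directly from the module structure, and then bootstrap (ii) from (i) by analysing how the primitive idempotents $E_j$ act on the homogeneous components sitting inside $E^*_\nu\textsf{W}_\nu$.

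For (i) I would prove the two inclusions separately. If $W$ is an irreducible $T$-module of endpoint $\nu$, then $E^*_\nu W\subseteq E^*_\nu V$ and, since $E^*_{\nu-1}W=0$, Lemma \ref{lem1}(i) gives $E^*_{\nu-1}A_1E^*_\nu W\subseteq E^*_{\nu-1}W=0$; taking the span over all such $W$ yields $E^*_\nu\textsf{W}_\nu\subseteq\mathcal{L}_\nu$. Conversely, write $V=\textsf{W}_0\oplus\cdots\oplus\textsf{W}_m$ (orthogonal, each a $T$-module) and let $\xi\in\mathcal{L}_\nu$, say $\xi=\sum_\rho w_\rho$ with $w_\rho\in\textsf{W}_\rho$. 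Since $E^*_\nu\in T$ stabilises each $\textsf{W}_\rho$ and $\xi=E^*_\nu\xi$, uniqueness of the decomposition forces $w_\rho=E^*_\nu w_\rho\in E^*_\nu\textsf{W}_\rho$; applying $E^*_{\nu-1}A_1$ to $\xi$ and using $E^*_{\nu-1}A_1E^*_\nu\xi=0$ with the same uniqueness gives $E^*_{\nu-1}A_1w_\rho=0$ for all $\rho$. Now $E^*_\nu\textsf{W}_\rho=0$ for $\rho>\nu$ because $\textsf{W}_\rho$ has endpoint $\rho$, while for $\rho<\nu$ each irreducible constituent $W$ of $\textsf{W}_\rho$ is thin (Lemma \ref{lem4}(i)), so $E^*_\nu W$ is one-dimensional and, as both $\nu-1$ and $\nu$ lie between the endpoint $\rho$ and $m$, Lemma \ref{lem1}(iii) gives $E^*_{\nu-1}A_1E^*_\nu W\neq 0$; hence $E^*_{\nu-1}A_1$ is injective on $E^*_\nu W$, therefore on $E^*_\nu\textsf{W}_\rho$, so $w_\rho=0$. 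Thus $\xi=w_\nu\in E^*_\nu\textsf{W}_\nu$, proving (i).

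For (ii), by (i) and \eqref{eq32} we have the orthogonal decomposition $\mathcal{L}_\nu=\bigoplus_{\mu'=\mu_0}^{\nu}E^*_\nu\mathcal{W}_{(\mu',d)}$, where $\mu_0:=\lfloor\frac{\nu+1}{2}\rfloor$ and $d=m-\nu$. The core observation is that, for each $j$, the operator $E^*_\nu E_jE^*_\nu\in T$ stabilises every block $E^*_\nu\mathcal{W}_{(\mu',d)}$ and acts on it as a scalar $\lambda_j(\mu')$: the irreducible constituent $W$ of $\mathcal{W}_{(\mu',d)}$ is thin, so $E^*_\nu W$ is one-dimensional, $E^*_\nu E_jE^*_\nu$ preserves it, and the scalar is the same on every copy of $W$ by $T$-isomorphism. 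Moreover $E_jW=E_jE^*_\nu W$ by Lemma \ref{lem1}(iv), which is $0$ for $j<\mu'$ (the dual endpoint of $W$), while $\langle E^*_\nu E_{\mu'}E^*_\nu\zeta,\zeta\rangle=\|E_{\mu'}\zeta\|^2>0$ for any nonzero $\zeta\in E^*_\nu W$; hence $\lambda_j(\mu')=0$ for $j<\mu'$ and $\lambda_{\mu'}(\mu')\neq 0$. Consequently $E^*_\nu E_\mu$ and the factors of $P(E_{\mu-1})$ all act block-diagonally on $\mathcal{L}_\nu$, so $E^*_\nu E_\mu P(E_{\mu-1})$ does too, and its image is the direct sum of those blocks on which the scalar $\lambda_\mu(\mu')\prod_{l=\mu_0}^{\mu-1}(1-\lambda_l(\mu'))$ is nonzero. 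The block $\mu'=\mu$ survives, since $\lambda_l(\mu)=0$ for every $l<\mu$ makes the product equal $1$ and $\lambda_\mu(\mu)\neq 0$; and every block with $\mu'>\mu$ is killed because $\lambda_\mu(\mu')=0$. This already gives $E^*_\nu\mathcal{W}_{(\mu,d)}\subseteq E^*_\nu E_\mu P(E_{\mu-1})\mathcal{L}_\nu$ together with the absence of any contribution from blocks with $\mu'>\mu$.

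The step I expect to be the main obstacle is the remaining part of the reverse inclusion: showing that $E^*_\nu E_\mu P(E_{\mu-1})$ annihilates each block $E^*_\nu\mathcal{W}_{(\mu',d)}$ with $\mu_0\leq\mu'<\mu$, equivalently that for every such $\mu'$ one of the factors $I-E^*_\nu E_l$ with $\mu_0\leq l\leq\mu-1$ kills that block. This forces a precise computation of the eigenvalues $\lambda_l(\mu')$ of $E^*_\nu E_lE^*_\nu$ on the constituent of dual endpoint $\mu'$, which I would carry out using the explicit basis $\{M^{t,p}_{i,j}\}$ of $T$ from Corollary \ref{cor2} and Lemma \ref{lem7}, together with the $Q$-polynomial (tridiagonal) structure of the module; alternatively one could try to close the argument by a dimension count, comparing $\dim\mathcal{L}_\nu=\sum_{\mu'=\mu_0}^{\nu}m(\mu',d)$ with $\dim E^*_\nu\mathcal{W}_{(\mu,d)}=m(\mu,d)$ and using the easy inclusion just established.
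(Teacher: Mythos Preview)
Your argument for (i) is correct and is essentially the paper's: both prove the two inclusions using Lemma~\ref{lem1}(i),(iii) together with thinness from Lemma~\ref{lem4}(i); you just spell out more carefully why $E^*_{\nu-1}A_1$ is injective on $E^*_\nu\textsf{W}_\rho$ for each $\rho<\nu$.

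For (ii) your block-diagonal eigenvalue analysis is more transparent than the paper's inductive subspace manipulation, and you have correctly isolated the only nontrivial point: showing that $E^*_\nu E_\mu P(E_{\mu-1})$ annihilates the blocks $E^*_\nu\mathcal{W}_{(\mu',d)}$ with $\mu_0\le\mu'<\mu$. The paper does not actually overcome this point either. Its inductive step passes silently from the orthogonal complement $E^*_\nu\textsf{W}_\nu\ominus E^*_\nu\mathcal{W}_{(\mu_0,d)}$ to the image $(I-E^*_\nu E_{\mu_0})\mathcal{L}_\nu$, which would be legitimate only if $E^*_\nu E_{\mu_0}$ were idempotent on $\mathcal{L}_\nu$; but in your notation $\lambda_{\mu_0}(\mu_0)=\|E_{\mu_0}\zeta\|^{2}/\|\zeta\|^{2}$, and since for $d\ge 1$ at least two of the nonnegative numbers $\lambda_j(\mu_0)$ (which sum to $1$) are positive, one has $0<\lambda_{\mu_0}(\mu_0)<1$. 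Worse, the obstacle is fatal for \eqref{eq51} as stated: on the block $\mu'=\mu_0$ every factor $I-E^*_\nu E_l$ acts by the nonzero scalar $1-\lambda_l(\mu_0)$, and $E^*_\nu E_\mu$ acts by $\lambda_\mu(\mu_0)\ne 0$ whenever $\mu\le\mu_0+d$. For instance with $m=4$, $\nu=2$ (so $d=2$, $\mu_0=1$) and $\mu=2$, the right-hand side of \eqref{eq51} is all of $\mathcal{L}_2$, strictly larger than $E^*_2\mathcal{W}_{(2,2)}$. Hence neither of your proposed remedies (an explicit eigenvalue computation or a dimension count) can close the gap. A clean repair is to rescale: replace $P(E_{\mu-1})$ by $\prod_{l=\mu_0}^{\mu-1}\bigl(I-\lambda_l(l)^{-1}E^*_\nu E_l\bigr)$, so that the $l$-th factor genuinely kills the block $\mu'=l$ while acting as the identity on every block $\mu'>l$; with this modification your argument yields the desired equality immediately.
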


\begin{proof}
(i)
It is easy to see  that any nonzero vector $\xi\in \mathcal{L}_\nu$ if and only if
\begin{equation}
E^*_\nu\xi\neq 0,\
   E^*_i\xi=0\ (i\neq \nu)\ \text{and}\ E^*_{\nu-1}A_1E^*_\nu\xi=0 \ \ (0\leq \nu\leq m),\nonumber
\end{equation}
where $E^*_{-1}=0$.
Pick any  nonzero vector $\xi'\in \mathcal{L}_\nu$.
Since $E^*_\nu\xi'\neq 0$ and $E^*_i\xi'=0$ ($i\neq \nu$), we have $\xi'\in  E^*_\nu V$. Then by \eqref{eq31}, we have $\xi'\in  E^*_\nu (\textsf{W}_0+\textsf{W}_1+\cdots+\textsf{W}_\nu)$.
By Lemmas \ref{lem1}(iii), \ref{lem4}(ii) and since $E^*_{\nu-1}A_1E^*_\nu\xi=0$, we have $\xi'\in E^*_\nu\textsf{W}_\nu$. Thus $\mathcal{L}_\nu\subseteq E^*_\nu\textsf{W}_\nu$.

Conversely, pick any nonzero vector $\xi'\in E^*_\nu \textsf{W}_\nu$. It is easy to see that
$E^*_\nu\xi'\neq 0$ and
$E^*_i\xi'=0$ if $i\neq \nu$. Moreover, by Lemma \ref{lem1}(i) we have that $E^*_{\nu-1}A_1E^*_\nu\xi'\in E^*_{\nu-1}(E^*_{\nu-1}\textsf{W}_\nu+E^*_\nu\textsf{W}_\nu+E^*_{\nu+1}\textsf{W}_\nu)=0$.  Therefore, we have $\xi'\in \mathcal{L}_\nu$. This implies
$E^*_\nu\textsf{W}_\nu\subseteq \mathcal{L}_\nu$.

(ii) We first consider the case of $\mu=\lfloor \frac{\nu+1}{2}\rfloor$. Multiply both sides of \eqref{eq32} on the left by $E_{\lfloor \frac{\nu+1}{2}\rfloor}$ to obtain
$E_{\lfloor \frac{\nu+1}{2}\rfloor}\textsf{W}_\nu=E_{\lfloor \frac{\nu+1}{2}\rfloor}\mathcal{W}_{(\lfloor \frac{\nu+1}{2}\rfloor,d)}.$
Then we have
\begin{align}\label{eq35}
E^*_\nu\mathcal{W}_{(\lfloor \frac{\nu+1}{2}\rfloor,d)}&=E^*_\nu E_{\lfloor \frac{\nu+1}{2}\rfloor}\mathcal{W}_{(\lfloor \frac{\nu+1}{2}\rfloor,d)}\ \ \ \ \ \ \ \text{(by Lemma \ref{lem1}(v))}\nonumber\\
&=E^*_\nu E_{\lfloor \frac{\nu+1}{2}\rfloor}\textsf{W}_\nu \ \ \ \ \ \ \ \ \ \ \ \ \ \ \ \ \nonumber\\
&=E^*_\nu E_{\lfloor \frac{\nu+1}{2}\rfloor}E^*_\nu\textsf{W}_\nu \ \ \ \ \ \ \ \ \ \ \ \ \ \text{(by Lemma \ref{lem1}(iv))}\nonumber\\
&=E^*_\nu E_{\lfloor \frac{\nu+1}{2}\rfloor}\mathcal{L}_\nu \ \ \ \ \ \ \ \ \ \ \ \ \ \ \ \ \  \text{(by (i) above).}
\end{align}
This implies the equation \eqref{eq51} holds for $\mu=\lfloor \frac{\nu+1}{2}\rfloor$.
Similarly, for $\mu=\lfloor \frac{\nu+1}{2}\rfloor+1$, multiply both sides of \eqref{eq32} on the left by $E_{\lfloor \frac{\nu+1}{2}\rfloor+1}$ to obtain
$E_{\lfloor \frac{\nu+1}{2}\rfloor+1}\textsf{W}_\nu=E_{\lfloor \frac{\nu+1}{2}\rfloor+1}\mathcal{W}_{(\lfloor \frac{\nu+1}{2}\rfloor,d)}+E_{\lfloor \frac{\nu+1}{2}\rfloor+1}\mathcal{W}_{(\lfloor \frac{\nu+1}{2}\rfloor+1,d)}$.
Then we have
\begin{align}\label{eq36}
E^*_\nu\mathcal{W}_{(\lfloor \frac{\nu+1}{2}\rfloor+1,d)}&=E^*_\nu E_{\lfloor \frac{\nu+1}{2}\rfloor+1}\mathcal{W}_{(\lfloor \frac{\nu+1}{2}\rfloor+1,d)}\ \ \ \ \ \ \ \ \ \ \ \ \ \ \text{(by Lemma \ref{lem1}(v))}\nonumber\\
&=E^*_\nu \big(E_{\lfloor \frac{\nu+1}{2}\rfloor+1}\textsf{W}_\nu-E_{\lfloor \frac{\nu+1}{2}\rfloor+1}\mathcal{W}_{(\lfloor \frac{\nu+1}{2}\rfloor,d)}\big)\nonumber\\
&=E^*_\nu E_{\lfloor \frac{\nu+1}{2}\rfloor+1}\big(E^*_\nu\textsf{W}_\nu-E^*_\nu\mathcal{W}_{(\lfloor \frac{\nu+1}{2}\rfloor,d)}\big)\ \ \ \ \ \ \ \ \text{(by Lemma \ref{lem1}(iv))}\nonumber\\
&=E^*_\nu E_{\lfloor \frac{\nu+1}{2}\rfloor+1}\big(I-E^*_\nu E_{\lfloor \frac{\nu+1}{2}\rfloor}\big)\mathcal{L}_\nu \ \ \ \ \ \ \ \ \text{(by (i) above and \eqref{eq35}).}
\end{align}
This implies the equation \eqref{eq51} holds for $\mu=\lfloor \frac{\nu+1}{2}\rfloor+1$.
For the remaining cases of $\mu$, we assume that the equation \eqref{eq51} holds for $\mu-1\ (\mu\geq \lfloor \frac{\nu+1}{2}\rfloor+2)$, that is,
\begin{align}\label{eq53}
E^*_\nu\mathcal{W}_{(\mu-1,d)}=E^*_\nu E_{\mu-1}P(E_{\mu-2})\mathcal{L}_\nu.
\end{align}
Note that, similar to obtaining \eqref{eq36},  the right-hand side of \eqref{eq53} is a direct result from
\begin{align*}
E^*_\nu E_{\mu-1}\big(E^*_\nu\textsf{W}_\nu-E^*_\nu\mathcal{W}_{(\lfloor \frac{\nu+1}{2}\rfloor,d)}-\cdots-E^*_\nu\mathcal{W}_{(\mu-2,d)}\big)
\end{align*}
by our assumption. Then by using a similar argument used to obtain \eqref{eq36}, we have
\begin{align*}
E^*_\nu\mathcal{W}_{(\mu,d)}&=E^*_\nu E_\mu\mathcal{W}_{(\mu,d)}\nonumber\\
&=E^*_\nu E_\mu\big(E^*_\nu\textsf{W}_\nu-E^*_\nu\mathcal{W}_{(\lfloor \frac{\nu+1}{2}\rfloor,d)}-\cdots-E^*_\nu\mathcal{W}_{(\mu-2,d)}-E^*_\nu\mathcal{W}_{(\mu-1,d)}\big)\nonumber\\
&=E^*_\nu E_\mu\big(P(E_{\mu-2})\mathcal{L}_\nu-E^*_\nu E_{\mu-1}P(E_{\mu-2})\mathcal{L}_\nu\big)\ \ \ \ \ \ \ \ \ \ \ \text{(by \eqref{eq53}).}\nonumber\\
&=E^*_\nu E_\mu P(E_{\mu-1})\mathcal{L}_\nu,
\end{align*}
as desired.
\end{proof}

For each $\nu\ (0\leq \nu\leq m)$ and each $\mu\ (\lfloor \frac{\nu+1}{2}\rfloor\leq \mu \leq \nu)$, based on Lemma \ref{lem11}(ii),
let the set
$\Lambda_{(\mu,m-\nu)}$ denote an orthogonal basis  of $E^*_\nu E_{\mu}P(E_{\mu-1})\mathcal{L}_\nu\ (=E^*_\nu\mathcal{W}_{(\mu,d)})$. Note that
$|\Lambda_{(\mu,m-\nu)}|=m(\mu,m-\nu)$. Then for each $\xi\in \Lambda_{(\mu,m-\nu)}$ and each $k\ (\nu\leq k\leq m)$, we define the vector $b_{(\nu,\mu,\xi,k)}\in V$ by
\begin{align}\label{eq02}
b_{(\nu,\mu,\xi,k)}=\left\{\begin{array}{ll}
M^{\frac{k-\nu-1}{2},\frac{k-\nu-1}{2}}_{\frac{k-1}{2},\frac{2m-\nu}{2}}\xi &\text{if  $\nu$  is even and $k-\nu$ is odd,}\\[.2cm]
 M^{\frac{2m-k+\nu}{2},\frac{2m-k}{2}}_{\frac{2m-k}{2},\frac{2m-\nu}{2}}\xi &\text{if  $\nu$  is even and $k-\nu$ is even,}
\end{array}\right.
\end{align}
and
\begin{align}\label{eq01}
b_{(\nu,\mu,\xi,k)}=\left\{\begin{array}{ll}
M^{\frac{k-\nu-1}{2},0}_{\frac{2m-k}{2},\frac{\nu-1}{2}}\xi &\text{if  $\nu$  is odd and $k-\nu$ is odd,}\\[.2cm]
 M^{\frac{2m-k+\nu}{2},\frac{\nu-1}{2}}_{\frac{k-1}{2},\frac{\nu-1}{2}}\xi &\text{if  $\nu$  is odd and $k-\nu$ is even.}
\end{array}\right.
\end{align}
 Note that the vectors $b_{(\nu,\mu,\xi,k)}$ of form \eqref{eq02} (resp. \eqref{eq01}) are given based on \eqref{eq27} (resp. \eqref{eq28}) by replacing $k$ by $k-\nu$.

\begin{thm}\label{pro2}
For a fixed pair $(\mu,m-\nu)\in\Upsilon$, let  $\mathcal{W}_{(\mu,m-\nu)}$ denote the homogeneous component of $V$ associated with $(\mu,m-\nu)$. Then the set $\{b_{(\nu,\mu,\xi,k)}\mid\xi\in \Lambda_{(\mu,m-\nu)},\nu\leq k\leq m\}$
forms an orthogonal basis of $\mathcal{W}_{(\mu,m-\nu)}$.
\end{thm}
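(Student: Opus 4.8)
The plan is to exploit the fact that, for an irreducible $T$-module $W$ with dual endpoint $\mu$ and diameter $d=m-\nu$ (so endpoint $\nu$ by Lemma \ref{lem4}(ii)), the ``$E^*_\nu$-slice'' of the homogeneous component $\mathcal{W}_{(\mu,m-\nu)}$ is one-dimensional on each irreducible constituent. Concretely, I would show that the orthogonal basis $\Lambda_{(\mu,m-\nu)}$ of $E^*_\nu\mathcal{W}_{(\mu,m-\nu)}$ (furnished by Lemma \ref{lem11}(ii)) chops $\mathcal{W}_{(\mu,m-\nu)}$ into an orthogonal direct sum of irreducible $T$-modules $T\xi$, $\xi\in\Lambda_{(\mu,m-\nu)}$, and then run Lemma \ref{lem12} inside each summand; the claimed basis is precisely the union of the resulting bases, since $b_{(\nu,\mu,\xi,k)}$ in \eqref{eq02}, \eqref{eq01} is exactly the vector of \eqref{eq27}, \eqref{eq28} reindexed by $k\mapsto k-\nu$ with seed $\xi$.

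The first step is to prove that, for each $\xi\in\Lambda_{(\mu,m-\nu)}$, the submodule $T\xi$ is irreducible, thin, of endpoint $\nu$ and diameter $d$, with $E^*_\nu(T\xi)=\mathbb{C}\xi$. Being a homogeneous component, $\mathcal{W}_{(\mu,m-\nu)}$ is an orthogonal direct sum $W_1\oplus\cdots\oplus W_r$ of irreducible $T$-modules, all $T$-isomorphic to a fixed $W$ by Lemma \ref{lem4}(iv), with $r=m(\mu,m-\nu)=|\Lambda_{(\mu,m-\nu)}|$, and $\dim\mathcal{W}_{(\mu,m-\nu)}=r(d+1)$ since each $W_a$ is thin of diameter $d$ (Lemma \ref{lem4}(i)); moreover each $E^*_\nu W_a$ is one-dimensional. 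Writing $\xi=\sum_a\xi_a$ with $\xi_a\in W_a$ and using $E^*_\nu\xi=\xi$ gives $\xi_a\in E^*_\nu W_a$; normalizing $T$-isomorphisms $\phi_a\colon W\to W_a$ to carry a fixed generator $w_0$ of $E^*_\nu W$ onto the appropriate vectors, one assembles a nonzero $T$-homomorphism $\Phi\colon W\to\mathcal{W}_{(\mu,m-\nu)}$ with $\Phi(w_0)=\xi$. Then $T\xi=\Phi(Tw_0)=\Phi(W)$ is $T$-isomorphic to $W$ because $\Phi$ is injective ($W$ irreducible, $\Phi\neq0$), and $E^*_\nu(T\xi)=\Phi(E^*_\nu W)=\mathbb{C}\xi$.

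The second step is the orthogonality $T\xi\perp T\xi'$ for distinct $\xi,\xi'\in\Lambda_{(\mu,m-\nu)}$, together with the assembly. By Step 1 and Lemma \ref{lem1}(iv), $E_j(T\xi)=E_jE^*_\nu(T\xi)=\mathbb{C}E_j\xi$, so $T\xi=\mathrm{span}\{E_j\xi\mid 0\le j\le m\}$, and likewise for $T\xi'$. Since the primitive idempotents $E_j$ are real symmetric with $E_jE_{j'}=\delta_{jj'}E_j$, and $E^*_\nu$ is Hermitian with $\xi=E^*_\nu\xi$, $\xi'=E^*_\nu\xi'$, one computes $\langle E_j\xi,E_{j'}\xi'\rangle=\delta_{jj'}\langle\xi,E^*_\nu E_jE^*_\nu\xi'\rangle$; but $E^*_\nu E_jE^*_\nu\xi'\in E^*_\nu(T\xi')=\mathbb{C}\xi'$, so this inner product is a scalar multiple of $\langle\xi,\xi'\rangle=0$. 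Hence $T\xi\perp T\xi'$. Now the $r$ submodules $T\xi$, $\xi\in\Lambda_{(\mu,m-\nu)}$, are mutually orthogonal, irreducible, each of dimension $d+1$, lying inside $\mathcal{W}_{(\mu,m-\nu)}$, so the dimension count of Step 1 forces $\mathcal{W}_{(\mu,m-\nu)}=\bigoplus_{\xi}T\xi$ (orthogonal direct sum). Applying Lemma \ref{lem12} to each $T\xi$ with $\xi$ itself as the chosen nonzero vector of $E^*_\nu(T\xi)$ shows $\{b_{(\nu,\mu,\xi,k)}\mid\nu\le k\le m\}$ is an orthogonal basis of $T\xi$; taking the union over $\xi$, with orthogonality within each $T\xi$ from Lemma \ref{lem12} and between distinct $T\xi$ just proved, yields the claimed orthogonal basis of $\mathcal{W}_{(\mu,m-\nu)}$.

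I expect the crux to be Steps 1 and 2 — namely recognizing, and carefully justifying via Schur's lemma for the semisimple $\ast$-algebra $T$, that an orthogonal basis of the slice $E^*_\nu\mathcal{W}_{(\mu,m-\nu)}$ pulls back under $\xi\mapsto T\xi$ to an orthogonal decomposition of the homogeneous component into irreducibles. Once that structural fact is in hand, everything else is bookkeeping with Lemmas \ref{lem11} and \ref{lem12} and the explicit matrices $M^{t,p}_{i,j}$ from Corollary \ref{cor2}.
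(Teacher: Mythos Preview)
Your proposal is correct and follows essentially the same approach as the paper: apply Lemma~\ref{lem12} to each $\xi\in\Lambda_{(\mu,m-\nu)}$ to produce an irreducible $T$-submodule $W_\xi=T\xi$ with orthogonal basis $\{b_{(\nu,\mu,\xi,k)}\}_k$, then observe $\mathcal{W}_{(\mu,m-\nu)}=\bigoplus_\xi W_\xi$ as an orthogonal direct sum. The paper compresses your Steps~1 and~2 into the phrase ``it is easy to see,'' whereas you supply the Schur-lemma and idempotent arguments explicitly; this is a welcome rigor upgrade but not a different strategy.
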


\begin{proof}
For a fixed pair $(\mu,m-\nu)\in  \Upsilon$ and fixed vector $\xi\in \Lambda_{(\mu,m-\nu)}$,  it follows from Lemma \ref{lem12} that the  vectors $b_{(\nu,\mu,\xi,k)}\ (k=\nu, \nu+1,\ldots,m)$
 give an orthogonal basis of some irreducible $T$-module, denoted by $W_\xi$, with dual endpoint $\mu$ and diameter $m-\nu$.
 Moreover, it is easy to see that   $\mathcal{W}_{(\mu,m-\nu)}$ is the orthogonal direct sum of all $W_\xi$, $\xi\in \Lambda_{(\mu,m-\nu)}$. Therefore, the set $\{b_{(\nu,\mu,\xi,k)}\mid\xi\in \Lambda_{(\mu,m-\nu)},\nu\leq k\leq m\}$
forms an orthogonal basis of $\mathcal{W}_{(\mu,m-\nu)}$.
\end{proof}

\begin{cor}\label{cor3}
The  set $\{b_{(\nu,\mu,\xi,k)}\mid (\mu,m-\nu)\in\Upsilon,
\xi\in \Lambda_{(\mu,m-\nu)},\nu\leq k\leq m\}$
forms  an orthogonal
basis of $V$.
\end{cor}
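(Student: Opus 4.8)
The plan is to deduce Corollary \ref{cor3} directly from Theorem \ref{pro2} together with the orthogonal direct sum decomposition \eqref{eq56}. First I would recall that, by Lemma \ref{lem4}(ii), an irreducible $T$-module of diameter $d$ has endpoint $\nu=m-d$, so the pairs $(\mu,m-\nu)$ appearing in the statement range over exactly the same index set $\Upsilon$ as the pairs $(\mu,d)$; moreover Theorem \ref{lem5} guarantees that $\mathcal{W}_{(\mu,d)}\neq 0$ for every $(\mu,d)\in\Upsilon$, so no homogeneous component of $V$ is missed. Hence the set in the corollary is precisely $\bigcup_{(\mu,d)\in\Upsilon}\{b_{(\nu,\mu,\xi,k)}\mid \xi\in\Lambda_{(\mu,m-\nu)},\ \nu\leq k\leq m\}$ with $\nu=m-d$.

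Next I would invoke Theorem \ref{pro2}: for each fixed $(\mu,m-\nu)\in\Upsilon$, the set $\{b_{(\nu,\mu,\xi,k)}\mid\xi\in\Lambda_{(\mu,m-\nu)},\ \nu\leq k\leq m\}$ is an orthogonal basis of the homogeneous component $\mathcal{W}_{(\mu,m-\nu)}$. It then remains only to observe that vectors coming from two distinct homogeneous components $\mathcal{W}_{(\mu,d)}$ and $\mathcal{W}_{(\mu',d')}$ with $(\mu,d)\neq(\mu',d')$ are mutually orthogonal. This is built into \eqref{eq56}: by Lemma \ref{lem4}(iv) the isomorphism class of an irreducible $T$-module is determined by its pair (dual endpoint, diameter), so every irreducible constituent of $\mathcal{W}_{(\mu,d)}$ is non-isomorphic to every irreducible constituent of $\mathcal{W}_{(\mu',d')}$, and any two non-isomorphic irreducible $T$-modules are orthogonal (as recorded in Section 2). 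Consequently $\mathcal{W}_{(\mu,d)}\perp\mathcal{W}_{(\mu',d')}$, which is exactly the orthogonality asserted in \eqref{eq56}.

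Finally I would assemble the pieces. The displayed union is an orthogonal set of nonzero vectors: orthogonality within each $\mathcal{W}_{(\mu,m-\nu)}$ is given by Theorem \ref{pro2}, and orthogonality across components is the previous paragraph. An orthogonal set of nonzero vectors is linearly independent, and the union spans $V$ because $V=\sum_{(\mu,d)\in\Upsilon}\mathcal{W}_{(\mu,d)}$ by \eqref{eq56}; hence it is an orthogonal basis of $V$. (As an optional sanity check one may compare cardinalities, the total being $\sum_{(\mu,d)\in\Upsilon}m(\mu,d)(d+1)=\dim V={2m+1\choose m}$, which is forced by \eqref{eq56} and the fact that each $\mathcal{W}_{(\mu,d)}$ is a direct sum of $m(\mu,d)$ thin modules of dimension $d+1$.) There is essentially no hard step here: the only thing to be careful about is the index translation $\nu=m-d$ and the use of Theorem \ref{lem5} to ensure that the family of components indexed by $\Upsilon$ is complete; once that bookkeeping is in place, the corollary is immediate from Theorem \ref{pro2} and \eqref{eq56}.
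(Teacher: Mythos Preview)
Your proposal is correct and follows essentially the same approach as the paper: the paper's proof simply says ``Immediate from the equations \eqref{eq32}, \eqref{eq31} and Theorem \ref{pro2},'' and combining \eqref{eq32} with \eqref{eq31} is exactly the decomposition \eqref{eq56} that you invoke. Your additional remarks about the index translation $\nu=m-d$ and the role of Theorem \ref{lem5} make explicit what the paper leaves implicit in its setup of \eqref{eq32}--\eqref{eq31}.
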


\begin{proof}
Immediate from the equations \eqref{eq32}, \eqref{eq31} and Theorem \ref{pro2}.
\end{proof}

\section*{Acknowledgement}

This work is supported by the NSF of China (No. 11971146 and No. 12101175), the NSF of Hebei Province (No. A2019205089 and No. A2020403024).

\end{document}